\newif\ifappend
\newif\ifarxiv
\newcommand{\noarg}{\bullet}
\newcommand{\deDe}{De}
\newcommand{\reals}{\mathbb{R}}
\newcommand{\extreals}{\overline{\mathbb{R}}}
\newcommand{\nnegreals}{\mathbb{R}_{\geq 0}}
\newcommand{\posreals}{\mathbb{R}_{>0}}
\newcommand{\nats}{\mathbb{N}}
\newcommand{\nnegints}{\mathbb{Z}_{\geq 0}}
\DeclarePairedDelimiter{\pr}{(}{)}
\DeclarePairedDelimiter{\st}{\{}{\}}
\DeclarePairedDelimiter{\br}{[}{]}
\DeclarePairedDelimiter{\abs}{\vert}{\vert}
\DeclarePairedDelimiterX{\ooi}[2]{]}{[}{#1, #2}
\DeclarePairedDelimiterX{\oci}[2]{]}{]}{#1, #2}
\DeclarePairedDelimiterX{\coi}[2]{[}{[}{#1, #2}
\DeclarePairedDelimiterX{\cci}[2]{[}{]}{#1, #2}
\DeclarePairedDelimiter{\norm}{\Vert}{\Vert}
\DeclarePairedDelimiter{\opnorm}{\Vert}{\Vert_{\mathrm{op}}}
\DeclarePairedDelimiter{\zopnorm}{\Vert}{\Vert_{\mathrm{op}}^{0}}
\newcommand{\indica}[1]{\mathbb{I}_{#1}}
\newcommand{\bfns}{\mathcal{L}}  
\newcommand{\gendomain}{\mathcal{D}}
\newcommand{\genfnal}{\Phi}
\newcommand{\altgenfnal}{\Psi}
\newcommand{\genmeas}{\mathcal{M}}
\newcommand{\genbbmeas}{\mathcal{M}_{\mathrm{b}}}
\newcommand{\genbameas}{\mathcal{M}^{\mathrm{b}}}
\newcommand{\eye}{\mathrm{I}}
\newcommand{\genop}{\mathrm{S}}
\newcommand{\genops}{\mathbb{O}}
\newcommand{\bops}{\mathbb{O}_{\mathrm{b}}}
\newcommand{\stsp}{\mathscr{X}}
\newcommand{\bfnsstsp}{\mathscr{L}}  
\newcommand{\utranop}{\overline{\mathrm{T}}}
\newcommand{\urateop}{\overline{\mathrm{Q}}}
\newcommand{\nisiogr}{\overline{\mathrm{N}}}
\newcommand{\nisiogen}{\overline{\mathrm{R}}}
\newcommand{\poissgen}{\overline{\mathrm{L}}}
\newcommand{\linpoissgen}[1]{\mathrm{L}_{#1}}
\newcommand{\poissgr}{\overline{\mathrm{M}}}
\newcommand{\tranop}{\mathrm{T}}
\newcommand{\rateop}{\mathrm{Q}}
\newcommand{\poissint}{\Lambda}
\newcommand{\llambda}{\underline{\lambda}}
\newcommand{\ulambda}{\overline{\lambda}}
\newcommand{\tset}{\nnegreals}
\newcommand{\ftsets}{\mathscr{U}}
\newcommand{\ftset}{U}
\newcommand{\ftsetalt}{V}
\newcommand{\pth}{\omega}
\newcommand{\cadpths}{\Omega}
\newcommand{\fdomain}{\mathscr{D}}
\newcommand{\bbmeas}{\mathscr{M}_{\mathrm{b}}}
\newcommand{\bameas}{\mathscr{M}^{\mathrm{b}}}
\newcommand{\meas}{\mathscr{M}}
\newcommand{\uprev}{\overline{E}}
\newcommand{\prev}{E}
\newcommand{\cprev}{\mathcal{E}}
\newcommand{\cprevext}{\hat{\mathcal{E}}}
\newcommand{\prevs}{\mathbb{E}}
\newcommand{\domprevs}{\mathbb{E}}
\newcommand{\prob}{P}
\newcommand{\altprob}{R}
\newcommand{\cyl}{F}
\newcommand{\cylevts}{\mathscr{F}}
\newcommand{\genset}{\mathcal{Y}}
\title
{
Sublinear Expectations for Countable-State Uncertain Processes
}
\author{
  \Name{Alexander Erreygers}\Email{alexander.erreygers@ugent.be}\\
  \addr Foundations Lab, Ghent University, Belgium
}
\begin{document}




\maketitle

\begin{abstract}
  Sublinear expectations for uncertain processes have received a lot of attention recently, particularly methods to extend a downward-continuous sublinear expectation on the bounded finitary functions to one on the non-finitary functions.
  In most of the approaches the domain of the extension is not very rich because it is limited to bounded measurable functions on the set of all paths.
  This contribution alleviates this problem in the countable-state case by extending, under a mild condition, to the extended real measurable functions on the set of càdlàg paths, and investigates when a sublinear Markov semigroup induces a sublinear expectation that satisfies this mild condition.
\end{abstract}
\begin{keywords}
  convex expectation, (coherent) upper expectation, monotone convergence, sublinear Markov process, sublinear Markov semigroup
\end{keywords}

\section{Introduction}
This contribution is part of the recent push to generalise the theory of (continuous-time) Markov processes from the measure-theoretic framework \citep{1960Chung-Markov,1986EthierKurtz-Markov,1994Rogers-Diffusions,1997Fristedt-Modern} to two closely-related frameworks: those of nonlinear expectations and imprecise probabilities.
These two frameworks have a common aim: to model uncertainty in a (more) robust manner.

The framework of nonlinear---sublinear or even convex---expectations was initially put forward by \citet{2005Peng}, and it dealt with Markov processes from its conception.
Since then the following (and quite possibly more) Markov processes have been generalised to this framework: those with finite state space \cite{2020Nendel}, countable state space \cite{2021Nendel} and \(\reals^d\) as state space \cite{2005Peng}, Lévy processes with \(\reals^d\) as state space \cite{2021HuPeng,2017Neufeld,2020Denk} and Feller processes with a Polish state space \cite{2021NendelRockner}.
Most authors only consider bounded functions on the set of \emph{all} paths that are measurable with respect to the product \(\sigma\)-algebra, which is a domain that is not very rich.
\Citet{2017Neufeld} are a notable exception to this; while they do assume càdlàg paths, they never go beyond functions of the state in a single time point though.

The theory of imprecise probabilities, which is actually a collection of frameworks including those of coherent lower/upper probabilities and sets of (coherent conditional) probabilities, was popularised by \citet{1991Walley}.
In this framework much has been done for Markov processes with finite state space \cite{2015Skulj,2017DeBock,2017KrakDeBockSiebes,2022Erreygers-IJAR}, but to the best of my knowledge, the only work regarding a non-finite state space is that on the Poisson process \cite{2019ErreygersDeBock}.

This contribution builds on and continues the aforementioned work by investigating sublinear expectations for countable-state uncertain processes in general and Markov processes in particular on a sufficiently rich domain of functions on the set of càdlàg paths.
On the one hand, it builds and extends some of the aforementioned work in the framework of nonlinear expectations: \sectionref{sec:nonlinear expectations} rids the extension results in \citep[Section~3]{2018DenkKupperNendel} from the requirement that all functions in the domain are bounded, \sectionref{sec:extension for stoch proc} establishes a more useful version of the robust Daniell--Kolmogorov Extension Theorem \citep[Section~4]{2018DenkKupperNendel} in the specific case of a countable state space by ensuring càdlàg paths, and \sectionref{sec:operators semigroups and beyond} deals with semigroups following \citet[Section~5]{2018DenkKupperNendel} and \citet{2020Nendel,2021Nendel}.
On the other hand, \sectionref{sec:extension for stoch proc,sec:operators semigroups and beyond} essentially extend the approach in \citep{2022Erreygers-IJAR} from the finite-state case to the countable-state case and the approach in~\citep{2019ErreygersDeBock} from the Poisson process to general Markov processes.

\ifarxiv
\else
  In order to meet the page limit, I have relegated almost all of the proofs to the Supplementary Material; these proofs can also be found in Appendix~A of the extended arXiv preprint \citep{2023Erreygers-arXiv} of this contribution.
\fi

\section{Nonlinear Expectations and Their Extensions}
\label{sec:nonlinear expectations}

A (linear) expectation is a linear, normed and isotone (real) functional on a suitable domain.
Examples include the Lebesgue integral with respect to a probability measure and (linear) previsions in the style of \citet{1974DeFinetti-Theory}---see also \citep{2000Whittle-Probability}.
A nonlinear expectation, then, generalises this notion by relaxing the requirement of linearity.
Two examples of such classes of functionals are the convex expectations that appear in (robust) mathematical finance \citep{2005Peng} and the coherent upper---or sublinear---expectations that are at the core of the theory of imprecise probabilities \citep{2014TroffaesDeCooman-Lower,1991Walley}; see also \citep{2003Pelessoni} and references therein.
In \sectionref{ssec:nonlinear expectations} we will formally introduce these well-known notions for domains that may include unbounded and even extended real functions, while \sectionref{ssec:extending nonlinear expectations} deals with extending these nonlinear expectations in such a way that their properties are (partially) preserved.
First, however, we introduce some necessary terminology and notation.

Consider some (non-empty) sets~\(\genset, \mathcal{Z}\).
We denote the set of all maps from~\(\genset\) to~\(\mathcal{Z}\) by~\(\mathcal{Z}^{\genset}\).
For any real-valued function~\(f\) on~\(\genset\), we let
\begin{equation*}
  \norm{f}
  \coloneqq \sup\abs{f}
  = \sup\st{\abs{f\pr{y}}\colon y\in \genset},
\end{equation*}
and we say that \(f\) is \emph{bounded} whenever~\(\norm{f}<+\infty\).
We collect the bounded real-valued functions on~\(\genset\) in~\(\bfns\pr{\genset}\subseteq\reals^{\genset}\).
The bounded real-valued functions on~\(\genset\) include the indicator functions: for any subset~\(Y\) of~\(\genset\), the corresponding \emph{indicator}~\(\indica{Y}\in\bfns\pr{\genset}\) maps~\(y\in\genset\) to~\(1\) if \(y\in Y\) and to~\(0\) otherwise; for any \(y\in\genset\), we shorten~\(\indica{\st{y}}\) to~\(\indica{y}\).

Fix some non-empty subset~\(\gendomain\) of~\(\extreals{}^{\genset}\).
An (extended real) \emph{functional}~\(\genfnal\) (on~\(\gendomain\)) is an extended real map on~\(\gendomain\).
We call a functional~\(\genfnal\) \emph{positively homogeneous} if \(\genfnal\pr{\mu f}=\mu \genfnal\pr{f}\) for all \(f\in\gendomain\) and \(\mu\in\nnegreals\) such that \(\mu f\in\gendomain\), and \emph{homogeneous} if the same equality holds for all \(f\in\gendomain\) and \(\mu\in\reals\) such that \(\mu f\in\gendomain\).
Furthermore, we call a functional~\(\genfnal\) \emph{convex} if \(\genfnal\pr{\lambda f+\pr{1-\lambda}g}\leq\lambda\genfnal\pr{f}+\pr{1-\lambda}\genfnal\pr{g}\) for all \(f,g\in\gendomain\) and \(\lambda\in\cci{0}{1}\) such that \(\lambda f+\pr{1-\lambda}g\) is meaningful\footnote{
  We follow the standard conventions---see for example \citep[Appendix~D]{2014TroffaesDeCooman-Lower}, \citep[Chapter~4, around Definition~8]{1997Fristedt-Modern} or \citep[Table~8.1]{2017Schilling-Measures}---so the sum or difference of two extended reals is meaningful if it does not lead to \(\pr{+\infty}-\pr{+\infty}\) or \(\pr{+\infty}+\pr{-\infty}\).
} and in~\(\gendomain\) and \(\lambda\genfnal\pr{f}+\pr{1-\lambda}\genfnal\pr{g}\) is meaningful, \emph{subadditive} if \(\genfnal\pr{f+g}\leq\genfnal\pr{f}+\genfnal\pr{g}\) for all \(f,g\in\gendomain\) such that \(f+g\) is meaningful and in~\(\gendomain\) and \(\genfnal\pr{f}+\genfnal\pr{g}\) is meaningful, and \emph{additive} if this inequality always holds with equality whenever it is meaningful.
Finally, \(\genfnal\) is said to be \emph{isotone} (sometimes also `order preserving') if \(\genfnal\pr{f}\leq\genfnal\pr{g}\) for all \(f,g\in\gendomain\) such that \(f\leq g\).

We are especially interested in the behaviour of (isotone) functionals with respect to monotone sequences.
A sequence~\(\pr{f_n}_{n\in\nats}\) of extended real functions on~\(\genset\) is said to be \emph{monotone} if it is either increasing or decreasing, where the sequence is \emph{increasing} if \(f_n\leq f_{n+1}\) for all \(n\in\nats\) and \emph{decreasing} if \(f_n\geq f_{n+1}\) for all \(n\in\nats\).
Any monotone sequence~\(\pr{f_n}_{n\in\nats}\) converges pointwise to a limit \(\lim_{n\to+\infty} f_n\in\extreals{}^{\genset}\); \(\pr{f_n}_{n\in\nats}\nearrow f\) denotes an increasing sequence with pointwise limit~\(f\), while \(\pr{f_n}_{n\in\nats}\searrow f\) denotes a sequence that decreases to~\(f\).

Now consider a functional~\(\genfnal\) on~\(\gendomain\).
If \(\genfnal\) is isotone, then for any monotone sequence~\(\pr{f_n}_{n\in\nats}\in\gendomain^{\nats}\), the derived sequence~\(\pr{\genfnal\pr{f_n}}_{n\in\nats}\) is also monotone, and therefore converges to a limit; if furthermore the pointwise limit~\(f\) of~\(\pr{f_n}_{n\in\nats}\) belongs to the domain~\(\gendomain\), then
\begin{equation*}
  \lim_{n\to+\infty} \genfnal\pr{f_n}
  = \inf\st[\big]{\genfnal\pr{f_n}\colon n\in\nats}
  \geq \genfnal\pr{f}.
\end{equation*}
We call the (not necessarily isotone) functional~\(\genfnal\) \emph{downward continuous\footnote{
  Often also `continuous from above'.
} on~\(\mathcal{C}\subseteq\gendomain\)} if for all \(\mathcal{C}^{\nats}\ni\pr{f_n}_{n\in\nats}\searrow f\in\mathcal{C}\), \(\lim_{n\to+\infty} \genfnal\pr{f_n}=\genfnal\pr{f}\), and simply \emph{downward continuous} if it is downward continuous on~\(\gendomain\).
Similarly, we call \(\genfnal\) \emph{upward continuous\footnote{
  Often also `continuous from below'.
} on~\(\mathcal{C}\subseteq\gendomain\)} if for all \(\mathcal{C}^{\nats}\ni\pr{f_n}_{n\in\nats}\nearrow f\in\mathcal{C}\), \(\lim_{n\to+\infty} \genfnal\pr{f_n}=\genfnal\pr{f}\), and simply \emph{upward continuous} if it is upward continuous on~\(\gendomain\).

\subsection{Nonlinear Expectations}
\label{ssec:nonlinear expectations}
A \emph{nonlinear expectation} is a functional~\(\genfnal\) whose domain~\(\gendomain\)---a subset of~\(\extreals{}^{\genset}\)---includes all constant real functions on~\(\genset\) and which is isotone and \emph{constant preserving} (or `normalised'), meaning that \(\genfnal\pr{\mu}=\mu\)\footnote{
  We identify any \(\mu\in\reals\) with the corresponding constant function~`\(\mu\)' on~\(\genset\) whose range is~\(\st{\mu}\).
} for all \(\mu\in\reals\).
Due to isotonicity, any nonlinear expectation~\(\genfnal\) dominates the infimum and is dominated by the supremum; hence, a nonlinear expectation~\(\genfnal\) with domain~\(\gendomain\subseteq\bfns\pr{\genset}\) is a real functional.

This contribution exclusively deals with nonlinear expectations that are convex or even sublinear.
Obviously, a \emph{convex expectation} is a convex nonlinear expectation, while a \emph{sublinear expectation} is a sublinear---so subadditive and positively homogeneous---nonlinear expectation; clearly, any sublinear expectation is also a convex expectation.
An \emph{upper expectation}\footnote{
  Many authors prefer the term `coherent upper prevision/expectation'.
} is a sublinear expectation whose domain~\(\gendomain\) is a linear space of real functions \cite[Theorems~4.15 and 13.34]{2014TroffaesDeCooman-Lower} and a \emph{linear expectation} is an upper expectation that is homogeneous and additive \cite[Theorem~13.36]{2014TroffaesDeCooman-Lower}.

Instead of a convex or sublinear expectation~\(\genfnal\) on~\(\gendomain\), it may be more convenient or appropriate to investigate the corresponding conjugate functional~\(\altgenfnal\) on~\(-\gendomain\coloneqq\st{-f\colon f\in\gendomain}\), defined by the conjugacy relation
\begin{equation*}
  \altgenfnal\pr{f}
  \coloneqq -\genfnal\pr{-f}
  \quad\text{for all } f\in-\gendomain.
\end{equation*}
Clearly, the conjugate functional of a convex expectation is a nonlinear expectation that is concave instead of convex, while that of a sublinear expectation is a nonlinear expectation that is superlinear instead of sublinear.
The conjugate of an upper expectation is what we call a \emph{lower expectation};\footnote{
  The term `coherent lower prevision' is more prevalent.
} for many reference works in the field of imprecise probabilities, including \citep{2014TroffaesDeCooman-Lower,1991Walley}, lower expectations are the primal object under study.

In the remainder of this section, we fix some linear space~\(\gendomain\subseteq\bfns\pr{\genset}\) that contains all constant real functions.
Let \(\prevs_{\gendomain}\) be the set of all linear expectations on \(\gendomain\).
With any convex expectation~\(\cprev\) on~\(\gendomain\), we associate the conjugate function
\begin{equation*}
  \cprev^*
  \colon\prevs_{\gendomain}\to\reals\cup\st{+\infty}
  \colon \prev\mapsto
  \sup\st{\prev\pr{f}-\cprev\pr{f}\colon f\in\gendomain}.
\end{equation*}
This conjugate function plays an important role in the following representation result, taken from \citep[Lemma~2.4]{2018DenkKupperNendel}; for upper expectations, it is known as the \emph{Upper Envelope Theorem} \cite[Theorem 4.38]{2014TroffaesDeCooman-Lower}.
\begin{lemma}
\label{lem:representation}
  For any convex expectation~\(\cprev\) on~\(\gendomain\) and \(f\in\gendomain\),
  \begin{equation*}
    \cprev\pr{f}
    = \max\st[\big]{\prev\pr{f}-\cprev^*\pr{f}\colon \prev\in\prevs_{\gendomain}, \cprev^*\pr{\prev}<+\infty}.
  \end{equation*}
  Furthermore, for any upper expectation~\(\uprev\) on~\(\gendomain\) and linear expectation~\(\prev\in\prevs_{\gendomain}\),
  \begin{align*}
    \uprev{}^*\pr{\prev}<+\infty
    \Leftrightarrow
    \uprev{}^*\pr{\prev}=0
    \Leftrightarrow \pr{\forall g\in\gendomain}~\prev\pr{g}\leq\uprev\pr{g},
  \end{align*}
  so for all \(f\in\gendomain\),
  \begin{align*}
    \cprev\pr{f}
    = \max\st[\big]{\prev\pr{f}\colon \prev\in\prevs_{\gendomain}, \pr{\forall g\in\gendomain}~\prev\pr{g}\leq\uprev\pr{g}}.
  \end{align*}
\end{lemma}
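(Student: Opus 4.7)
The plan is to establish the three claims in sequence. The first is a Fenchel--Moreau-type duality statement. The easy direction $\cprev\pr{f}\geq\prev\pr{f}-\cprev^*\pr{\prev}$ for every $\prev\in\prevs_{\gendomain}$ is immediate on plugging $g=f$ into the defining supremum of $\cprev^*\pr{\prev}$. For the reverse inequality I would apply Hahn--Banach separation in the normed space $\pr{\gendomain,\norm{\cdot}}$: given $f\in\gendomain$ and $\epsilon>0$, the point $\pr{f,\cprev\pr{f}-\epsilon}$ lies strictly below the closed convex epigraph $\st{\pr{g,t}\in\gendomain\times\reals\colon t\geq\cprev\pr{g}}$ and can therefore be separated from it by a continuous linear functional on $\gendomain\times\reals$. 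Because the epigraph extends upwards to $+\infty$, the $\reals$-component of this functional has the right sign to normalise, yielding a continuous linear $\Lambda$ on $\gendomain$ such that $\Lambda\pr{g-f}\leq\cprev\pr{g}-\cprev\pr{f}+\epsilon$ for all $g\in\gendomain$. Checking that $\Lambda$ restricts to a genuine linear expectation then amounts to extracting isotonicity (substitute $g=f-\mu h$ with $h\in\gendomain$, $h\geq 0$ and $\mu>0$, divide by $\mu$, and let $\mu\to+\infty$) and unit normalisation (substitute $g=f\pm\mu$ for $\mu>0$). The separating inequality then rewrites as $\cprev^*\pr{\Lambda}\leq\Lambda\pr{f}-\cprev\pr{f}+\epsilon<+\infty$, which gives the supremum. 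To upgrade to an attained maximum I would invoke Banach--Alaoglu: $\prevs_{\gendomain}$ is weak-$*$ closed in the dual unit ball of $\gendomain$ and hence weak-$*$ compact, and the map $\prev\mapsto\prev\pr{f}-\cprev^*\pr{\prev}$ is weak-$*$ upper semicontinuous because $\cprev^*$ is a pointwise supremum of weak-$*$ continuous affine functionals.

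For the second claim I would exploit positive homogeneity of $\uprev$ together with the fact that $\gendomain$ is a linear space. The equivalence $\uprev^*\pr{\prev}=0\Leftrightarrow\pr{\forall g\in\gendomain}~\prev\pr{g}\leq\uprev\pr{g}$ is routine: the latter directly gives $\uprev^*\pr{\prev}\leq 0$, while substituting $g=0$ into the defining supremum yields $\uprev^*\pr{\prev}\geq 0$. The non-trivial direction is that $\uprev^*\pr{\prev}<+\infty$ implies $\prev\pr{g}\leq\uprev\pr{g}$ for every $g\in\gendomain$: if instead $\prev\pr{g_{0}}>\uprev\pr{g_{0}}$ for some $g_{0}\in\gendomain$, then positive homogeneity of $\uprev$ and linearity of $\prev$ yield $\prev\pr{\mu g_{0}}-\uprev\pr{\mu g_{0}}=\mu\br{\prev\pr{g_{0}}-\uprev\pr{g_{0}}}$, which diverges to $+\infty$ as $\mu\to+\infty$, contradicting finiteness of $\uprev^*\pr{\prev}$. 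The third claim is then the first applied with $\cprev=\uprev$, using the second claim to rewrite the index set $\st{\prev\colon\uprev^*\pr{\prev}<+\infty}$ as $\st{\prev\colon\pr{\forall g\in\gendomain}~\prev\pr{g}\leq\uprev\pr{g}}$, on which $\uprev^*\pr{\prev}$ vanishes.

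The main obstacle is the Hahn--Banach step in the first claim: one has to verify that the separating functional on $\gendomain\times\reals$ can be normalised to have $\reals$-coefficient $1$, and that the resulting $\Lambda$ is automatically isotone and unit-normalised---each of which requires an apt choice of test function $g$ in the separating inequality. The weak-$*$ compactness argument upgrading the supremum to an attained maximum is a secondary technicality, while the second and third claims are essentially bookkeeping once the first is in hand.
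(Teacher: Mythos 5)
Your proposal is correct and follows essentially the same route as the paper's source for this result: the paper states the lemma without proof, citing Lemma~2.4 of Denk, Kupper and Nendel (2018) and the Upper Envelope Theorem of Troffaes and De Cooman, both of which rest on exactly the Fenchel--Moreau/Hahn--Banach separation argument (plus weak-$*$ compactness for attainment and positive homogeneity for the sublinear case) that you sketch. The details you flag as obstacles---normalising the $\reals$-coefficient of the separating functional, extracting isotonicity and unit normalisation, and the closedness of the epigraph, which follows from the $1$-Lipschitz continuity of a convex expectation with respect to $\norm{\noarg}$---are all routine to fill in.
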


This contribution is mainly concerned with convex expectations~\(\cprev\) that are continuous with respect to monotone sequences.
The following result, taken from \citep[Lemma~3.2]{2018DenkKupperNendel}, links the downward continuity of a convex expectation~\(\cprev\) to the downward continuity of the linear expectations~\(\prev\in\prevs_{\gendomain}\) with \(\cprev^*\pr{\prev}<+\infty\); \citet[Proposition 5.1.2]{2017MirandaZaffalon} give a similar result for the particular case of upper expectations.
\begin{lemma}
\label{lem:downward for convex}
  A convex expectation~\(\cprev\) on~\(\gendomain\) is downward continuous if and only if every linear expectation~\(\prev\) on~\(\gendomain\) with \(\cprev^*\pr{\prev}<+\infty\) is downward continuous.
\end{lemma}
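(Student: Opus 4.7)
The plan is to establish each direction separately, using the representation of \lemmaref{lem:representation} as the main tool.

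\emph{Necessity.} I fix $\prev\in\prevs_\gendomain$ with $\cprev^*\pr{\prev}\eqqcolon M<+\infty$ and a sequence $\gendomain^\nats\ni\pr{f_n}_{n\in\nats}\searrow f\in\gendomain$; by linearity of~$\prev$, downward continuity reduces to showing that $\prev\pr{h_n}\to 0$ for $h_n\coloneqq f_n-f\searrow 0$. Since $\gendomain$ is a linear space, $\lambda h_n\in\gendomain$ for every~$\lambda>0$, and the defining bound $\prev\pr{\lambda h_n}-\cprev\pr{\lambda h_n}\leq\cprev^*\pr{\prev}$ combined with the linearity of~$\prev$ yields $\prev\pr{h_n}\leq\cprev\pr{\lambda h_n}/\lambda+M/\lambda$. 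Applying the downward continuity of~$\cprev$ to $\lambda h_n\searrow 0$ gives $\cprev\pr{\lambda h_n}\to 0$, so $\limsup_n\prev\pr{h_n}\leq M/\lambda$; this together with $\prev\pr{h_n}\geq 0$ (isotonicity) and $\lambda\to+\infty$ establishes the direction.

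\emph{Sufficiency.} I argue by contradiction: suppose $\cprev\pr{f_n}\geq\cprev\pr{f}+\delta$ for some $\delta>0$ and every~$n\in\nats$, with $\pr{f_n}_{n\in\nats}\searrow f$ in~$\gendomain$. For each~$n$, \lemmaref{lem:representation} supplies $\prev_n\in\prevs_\gendomain$ attaining the max, so $\cprev\pr{f_n}=\prev_n\pr{f_n}-\cprev^*\pr{\prev_n}$; combining this with $\prev_n\pr{f}\leq\cprev\pr{f}+\cprev^*\pr{\prev_n}$ and subtracting yields $\prev_n\pr{h_n}\geq\delta$ where $h_n\coloneqq f_n-f\searrow 0$. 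Moreover, the penalties are uniformly bounded: $\cprev^*\pr{\prev_n}=\prev_n\pr{f_n}-\cprev\pr{f_n}\leq\norm{f_1}+\norm{f}\eqqcolon M'$.

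The core step---and the main obstacle---is to pass from the family $\pr{\prev_n}_{n\in\nats}$ to a \emph{single} dominated linear expectation that still witnesses~$\delta$ along~$\pr{h_k}_{k\in\nats}$. I propose to use a Banach limit $\mathrm{LIM}$ on $\ell^\infty\pr{\nats}$, setting $\bar\prev\pr{g}\coloneqq\mathrm{LIM}_{n\to\infty}\prev_n\pr{g}$; the uniform bound $\abs{\prev_n\pr{g}}\leq\norm{g}$ makes this well-defined, while linearity, isotonicity and constant preservation all transfer from the~$\prev_n$ to~$\bar\prev$, so $\bar\prev\in\prevs_\gendomain$. Since $\prev_n\pr{g}-\cprev\pr{g}\leq M'$ for every~$g$, we get $\cprev^*\pr{\bar\prev}\leq M'<+\infty$, and the hypothesis forces $\bar\prev$ to be downward continuous; hence $\bar\prev\pr{h_k}\to 0$. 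On the other hand, for each fixed~$k$, $\prev_n\pr{h_k}\geq\prev_n\pr{h_n}\geq\delta$ whenever $n\geq k$, so $\bar\prev\pr{h_k}\geq\delta$---the desired contradiction. This Banach-limit construction neatly sidesteps a weak-$\ast$ compactness argument on the sublevel sets of~$\cprev^*$.
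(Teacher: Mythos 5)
Your proof is correct. Note that the paper itself does not prove this lemma---it imports it verbatim from Lemma~3.2 of Denk, Kupper and Nendel (2018)---so the comparison is really with that source. Your necessity direction is the standard one: the scaling bound \(\prev\pr{h_n}\leq\cprev\pr{\lambda h_n}/\lambda+\cprev^*\pr{\prev}/\lambda\) together with \(\cprev\pr{\lambda h_n}\to\cprev\pr{0}=0\) and nonnegativity of \(\prev\pr{h_n}\) is exactly the argument in the cited reference. Your sufficiency direction deviates in the key step: where the reference (and the natural route suggested by \lemmaref{lem:representation}) passes to a single dominated functional via weak-\(\ast\) compactness of the sublevel set \(\st{\prev\in\prevs_{\gendomain}\colon\cprev^*\pr{\prev}\leq M'}\) and lower semicontinuity of \(\cprev^*\)---or, equivalently, a minimax exchange of \(\inf_n\) and \(\sup_{\prev}\)---you instead aggregate the maximisers \(\prev_n\) with a Banach limit. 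This works: the uniform penalty bound \(\cprev^*\pr{\prev_n}\leq\norm{f_1}+\norm{f}\) transfers to \(\cprev^*\pr{\bar{\prev}}\leq M'\) because a Banach limit is dominated by the limit superior, the monotonicity \(h_k\geq h_n\) for \(n\geq k\) gives \(\bar{\prev}\pr{h_k}\geq\delta\), and the hypothesis forces \(\bar{\prev}\pr{h_k}\to0\), which is the desired contradiction. What the Banach-limit route buys is that it avoids any topology on \(\prevs_{\gendomain}\) and any semicontinuity argument for \(\cprev^*\); what it costs is nothing beyond what is already needed (the Hahn--Banach theorem, which \lemmaref{lem:representation} uses anyway). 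Both arguments are valid, and yours is self-contained given \lemmaref{lem:representation}.
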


\lemmaref{lem:downward for convex} is crucial in the proof of Theorem~3.10 in~\citep{2018DenkKupperNendel}, and consequently also in our extension of it.
We will see why in due course.

\subsection{Extending Nonlinear Expectations}
\label{ssec:extending nonlinear expectations}
For the remainder of this section, we assume that the linear subspace~\(\gendomain\) of~\(\bfns\pr{\genset}\) that includes all constant functions is also closed under pointwise minima (and then maxima), meaning that for all \(f,g\in\gendomain\), \(f\wedge g\in\gendomain\) (and then \(f\vee g=-\pr{\pr{-f}\wedge\pr{-g}}\in\gendomain\)); this makes \(\gendomain\) a linear lattice \cite[Definition~1.1]{2014TroffaesDeCooman-Lower}.
We let \(\sigma\pr{\gendomain}\) denote the smallest \(\sigma\)-algebra~\(\Sigma\) in~\(\genset\) such that every \(f\in\gendomain\) is \(\Sigma/\mathcal{B}\pr{\reals}\)-measurable---see, for example, \cite[Definition~I.5 b)]{1978DellacherieMeyer-ProbabilitiesA}.
Due to our assumptions, the Daniell--Stone Theorem \citep[Theorem~III.35]{1978DellacherieMeyer-ProbabilitiesA} establishes that if a linear expectation~\(\prev\) on~\(\gendomain\) is downward continuous, then there is a unique probability measure~\(\prob_{\prev}\) on~\(\sigma\pr{\gendomain}\) such that
\begin{equation*}
  \prev\pr{f}
  = \int f\mathrm{d}\prob_{\prev}
  \quad\text{for all } f\in\gendomain.
\end{equation*}
This means that we can extend \(\prev\) to the domain of the Lebesgue integral with respect to~\(\prob_{\prev}\); we adhere to the definition in \citep[Chapter~8, Definition~2]{1997Fristedt-Modern}, so the resulting extension will be extended real valued.
In general, the domain of the Lebesgue integral depends on~\(\prob_{\prev}\); however, it is easy to verify \citep[see][Lemma 24]{2022Erreygers-IJAR} that the domain definitely includes those extended real functions~\(f\) on~\(\genset\) that are \(\sigma\pr{\gendomain}/\mathcal{B}\pr{\extreals}\)-measurable\footnote{
  \(\mathcal{B}\pr{\extreals}\) denotes the Borel \(\sigma\)-algebra generated by the `usual' topology on~\(\extreals\); for more details, see \cite[Chapter~2 and Appendix~C.2]{1997Fristedt-Modern} or \cite[Chapter~8]{2017Schilling-Measures}.
} and either \emph{bounded below}, meaning that \(\inf f>-\infty\), or \emph{bounded above}, meaning that \(\sup f<+\infty\).
In the remainder, we restrict ourselves to such functions; this perhaps somewhat unconventional restriction of the domain is mathematically convenient, and I am yet to encounter an example where the functions of interest do not belong to it.

We collect all extended real functions on~\(\genset\) that are \(\sigma\pr{\gendomain}/\mathcal{B}\pr{\extreals}\)-measurable and bounded below in~\(\genbbmeas\pr{\gendomain}\) and those that are bounded above in~\(\genbameas\pr{\gendomain}\), and we let \(\genmeas\pr{\gendomain}\coloneqq\genbbmeas\pr{\gendomain}\cup\genbameas\pr{\gendomain}\).
Note that \(\genmeas\pr{\gendomain}\) includes
\begin{equation*}
  \gendomain_{\delta, \mathrm{b}}
  \coloneqq\st[\big]{f\in\bfns\pr{\genset}\colon \pr[\big]{\exists\pr{f_n}_{n\in\nats}\in\gendomain^{\nats}}~\pr{f_n}_{n\in\nats}\searrow f}.
\end{equation*}

The following extension result generalises Theorem~3.10 of~\citet{2018DenkKupperNendel} by extending the domain from bounded measurable functions to extended-real measurable functions that are either bounded below or bounded above, so from \(\genmeas\pr{\gendomain}\cap\bfns\pr{\genset}\) to~\(\genmeas\pr{\gendomain}\).
\ifarxiv
The proof remains largely the same, which is why we have relegated it to \appendixref{asec:proofs}.
\else
The proof remains largely the same, which is why we have relegated it to the Supplementary Material.
\fi
\begin{theorem}
\label{the:convex extension}
  Consider a convex expectation~\(\cprev\) on~\(\gendomain\) that is downward continuous, and let \(\cprevext\colon\genmeas\pr{\gendomain}\to\extreals\) be defined for all \(f\in\genmeas\pr{\gendomain}\) by
  \begin{equation*}
    \cprevext\pr{f}
    \coloneqq \sup\st*{\int f\mathrm{d}\prob_{\prev}-\cprev^*\pr{\prev}\colon \prev\in\prevs_{\gendomain}, \cprev^*\pr{\prev}<+\infty}.
  \end{equation*}
  Then \(\cprevext\) is a convex expectation that extends \(\cprev\), is downward continuous on~\(\gendomain_{\delta, \mathrm{b}}\) and upward continuous on~\(\genbbmeas\pr{\gendomain}\).
  Furthermore, if \(\cprev\) is an upper expectation, then \(\cprevext\) is a sublinear expectation.
\end{theorem}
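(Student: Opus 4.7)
The plan is to verify each claim of the theorem in turn, drawing on \lemmaref{lem:representation}, \lemmaref{lem:downward for convex} and the Daniell--Stone Theorem for the integral representations of the linear expectations $\prev\in\prevs_\gendomain$ with $\cprev^*\pr{\prev}<+\infty$. That $\cprevext$ extends $\cprev$ is immediate: for $f\in\gendomain$, Daniell--Stone gives $\int f\,\mathrm{d}\prob_\prev=\prev\pr{f}$, and \lemmaref{lem:representation} then delivers $\cprevext\pr{f}=\cprev\pr{f}$. The convex-expectation properties are also routine: isotonicity and convexity follow from the fact that $\cprevext$ is the supremum of a family of functionals affine in $f$ built from isotone Lebesgue integrals, and constant preservation follows from $\cprev\pr{0}=0$, which via \lemmaref{lem:representation} forces $\inf_\prev \cprev^*\pr{\prev}=0$.

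The crux is downward continuity on $\gendomain_{\delta,\mathrm{b}}$. I would first establish the intermediate claim that for every $f\in\gendomain_{\delta,\mathrm{b}}$ and every $\pr{g_m}_{m\in\nats}\in\gendomain^\nats$ with $\pr{g_m}_{m\in\nats}\searrow f$,
\[
  \lim_{m\to+\infty}\cprev\pr{g_m}=\cprevext\pr{f}.
\]
The $\geq$ direction uses isotonicity of $\cprevext$: since $g_m\geq f$, $\cprev\pr{g_m}=\cprevext\pr{g_m}\geq\cprevext\pr{f}$. For the reverse direction, \lemmaref{lem:downward for convex} says each admissible $\prev$ is downward continuous on $\gendomain$, so via Daniell--Stone it extends to the Lebesgue integral and bounded monotone convergence yields $\prev\pr{g_m}\searrow\int f\,\mathrm{d}\prob_\prev$; combining this with \lemmaref{lem:representation} for $\cprev\pr{g_m}$ and the minimax inequality $\sup_\prev\inf_m\leq\inf_m\sup_\prev$ gives $\cprevext\pr{f}\leq\lim_m\cprev\pr{g_m}$.

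Given this intermediate claim, a general decreasing sequence $\pr{f_n}_{n\in\nats}\searrow f$ in $\gendomain_{\delta,\mathrm{b}}$ is handled by a diagonal construction: for each $n$, pick $\pr{g^{\pr{n}}_m}_{m\in\nats}\in\gendomain^\nats$ with $\pr{g^{\pr{n}}_m}_{m\in\nats}\searrow f_n$, and set $h_k\coloneqq g^{\pr{1}}_k\wedge\dots\wedge g^{\pr{k}}_k$. Lattice closure of $\gendomain$ gives $h_k\in\gendomain$, and a short check shows $\pr{h_k}_{k\in\nats}\searrow f$ with $h_k\geq f_k$. Applying the intermediate claim to $\pr{h_k}_{k\in\nats}$ then sandwiches $\lim_k\cprevext\pr{f_k}$ between $\cprevext\pr{f}$ (by isotonicity) and $\lim_k\cprev\pr{h_k}=\cprevext\pr{f}$ (using $f_k\leq h_k$), yielding downward continuity. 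Upward continuity on $\genbbmeas\pr{\gendomain}$ is easier: for any $\pr{f_n}_{n\in\nats}\nearrow f$ with each $f_n$ bounded below, the uniform lower bound $f_n\geq f_1$ permits the extended monotone convergence theorem to give $\int f_n\,\mathrm{d}\prob_\prev\nearrow\int f\,\mathrm{d}\prob_\prev$ for every admissible $\prev$, and the always-permissible interchange of the suprema over $\prev$ and over $n$ then yields $\cprevext\pr{f_n}\nearrow\cprevext\pr{f}$. Finally, if $\cprev$ is an upper expectation, \lemmaref{lem:representation} forces $\cprev^*\pr{\prev}\in\st{0,+\infty}$, so $\cprevext\pr{f}=\sup\st{\int f\,\mathrm{d}\prob_\prev\colon\prev\in\prevs_\gendomain,\ \prev\leq\cprev\text{ on }\gendomain}$ and subadditivity and positive homogeneity are inherited from the Lebesgue integral.

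The main obstacle will be the downward-continuity argument, specifically the interchange step in the intermediate claim: the minimax inequality alone gives only one direction, and the other relies essentially on the already-established fact that $\cprev$ and $\cprevext$ coincide on $\gendomain$ together with isotonicity---without this observation, one is stuck needing compactness or a selection principle on $\prevs_\gendomain$ that is not in hand.
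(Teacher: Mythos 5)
Your proposal is correct, and most of it follows the paper's own route: the extension property via Daniell--Stone and \lemmaref{lem:representation}, upward continuity on~\(\genbbmeas\pr{\gendomain}\) via monotone convergence for each~\(\hat{\prev}\) followed by an exchange of suprema, and sublinearity via \(\cprev^*\pr{\prev}=0\) for dominated~\(\prev\) are all argued in essentially the same way. The genuine divergence is the downward continuity on~\(\gendomain_{\delta,\mathrm{b}}\): the paper simply cites Theorem~3.10 of \citet{2018DenkKupperNendel} for the restriction of~\(\cprevext\) to the bounded measurable functions, whereas you reprove it. Your intermediate claim---that \(\lim_{m\to+\infty}\cprev\pr{g_m}=\cprevext\pr{f}\) for any \(\gendomain^{\nats}\ni\pr{g_m}_{m\in\nats}\searrow f\in\gendomain_{\delta,\mathrm{b}}\), with `\(\geq\)' from isotonicity and the extension property, and `\(\leq\)' from \(\sup_{\prev}\inf_m\pr{\prev\pr{g_m}-\cprev^*\pr{\prev}}\leq\inf_m\sup_{\prev}\pr{\prev\pr{g_m}-\cprev^*\pr{\prev}}\) together with the downward continuity of each dominated~\(\prev\) (\lemmaref{lem:downward for convex})---is sound, and the diagonal sequence \(h_k\coloneqq g^{\pr{1}}_k\wedge\dots\wedge g^{\pr{k}}_k\) indeed lies in~\(\gendomain\), decreases to~\(f\) and dominates~\(f_k\), so the sandwich closes. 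Your version buys self-containedness at the cost of length; the paper's buys brevity by outsourcing exactly this argument to the cited result.

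The one place where you are too quick is convexity (and, later, subadditivity). In this extended-real setting it is not literally true that `a supremum of affine functionals is convex', because \(\hat{\prev}\pr{\lambda f+\pr{1-\lambda}g}=\lambda\hat{\prev}\pr{f}+\pr{1-\lambda}\hat{\prev}\pr{g}\) only holds when the right-hand side is meaningful. The paper deals with this by a short case analysis, using the bound \(\hat{\prev}\pr{f}\leq\cprevext\pr{f}+\cprev^*\pr{\prev}\) to show that whenever \(\lambda\cprevext\pr{f}+\pr{1-\lambda}\cprevext\pr{g}\) is meaningful, so is \(\lambda\hat{\prev}\pr{f}+\pr{1-\lambda}\hat{\prev}\pr{g}\) for every admissible~\(\prev\). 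This step is routine but should not be omitted.
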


The extension~\(\cprevext\) in \theoremref{the:convex extension} is the unique one with these properties, at least when we restrict its domain to the bounded below functions.
The following result is an immediate corollary of \citep[Theorem~3.10]{2018DenkKupperNendel},
\ifarxiv
so we have relegated its proof to \appendixref{asec:proofs};
\else
so we have relegated its proof to the Supplementary Material;
\fi
in this result, we write \(\gendomain^{\nats}\ni\pr{f_n}_{n\in\nats}\searrow \leq f\) to mean any decreasing~\(\pr{f_n}_{n\in\nats}\in\gendomain^{\nats}\) such that~\(\lim_{n\to+\infty}f_n\leq f\).
\begin{corollary}
\label{cor:extension is unique}
  Consider a convex expectation~\(\cprevext\) on~\(\genbbmeas\pr{\gendomain}\) that is downward continuous on~\(\gendomain_{\delta, \mathrm{b}}\) and upward continuous.
  Then for all \(f\in\genbbmeas\pr{\gendomain}\),
  \begin{equation*}
    \cprevext\pr{f}
    = \sup\st*{\lim_{n\to+\infty}\cprevext\pr{f_n}\colon \gendomain^{\nats}\ni\pr{f_n}_{n\in\nats}\searrow \leq f}.
  \end{equation*}
\end{corollary}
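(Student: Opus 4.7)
The plan is to reduce to \citep[Theorem~3.10]{2018DenkKupperNendel}---the result that \theoremref{the:convex extension} generalises---for the bounded-function case, and then to extend to $\genbbmeas\pr{\gendomain}$ via the assumed upward continuity.

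First I would consider the restriction $\cprev \coloneqq \cprevext|_\gendomain$. Since any $f \in \gendomain$ is the pointwise limit of the constant decreasing sequence $\pr{f, f, \dots}$, we have $\gendomain \subseteq \gendomain_{\delta, \mathrm{b}}$, and hence the downward continuity of $\cprevext$ on $\gendomain_{\delta, \mathrm{b}}$ specialises to downward continuity of $\cprev$ on $\gendomain$, making $\cprev$ a downward continuous convex expectation. The next step is to invoke \citep[Theorem~3.10]{2018DenkKupperNendel}: on $\genbbmeas\pr{\gendomain} \cap \bfns\pr{\genset}$ there is a \emph{unique} convex expectation that extends $\cprev$ and is both downward continuous on $\gendomain_{\delta, \mathrm{b}}$ and upward continuous, and that theorem identifies this extension with
\begin{equation*}
  g \mapsto \sup\st[\big]{\lim_{n\to+\infty}\cprev\pr{g_n} \colon \gendomain^\nats \ni \pr{g_n}_{n\in\nats} \searrow \leq g}.
\end{equation*}
Because the restriction of $\cprevext$ to bounded measurable functions extends $\cprev$ and enjoys exactly these continuity properties, uniqueness forces the two to coincide; since moreover $\cprev\pr{g_n} = \cprevext\pr{g_n}$ for $g_n \in \gendomain$, this yields the desired formula for every bounded $f \in \genbbmeas\pr{\gendomain}$.

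To handle an arbitrary $f \in \genbbmeas\pr{\gendomain}$, I would truncate from above: set $f_k \coloneqq f \wedge k$ for each $k \in \nats$, so that each $f_k$ is bounded measurable and $\pr{f_k}_{k\in\nats} \nearrow f$. Upward continuity then gives $\cprevext\pr{f} = \sup_{k\in\nats}\cprevext\pr{f_k}$, and substituting the bounded-case formula for each $\cprevext\pr{f_k}$ produces a double supremum that must be identified with the single one in the statement. The ``$\leq$'' direction is immediate since $\pr{g_n}_{n\in\nats} \searrow \leq f_k$ entails $\pr{g_n}_{n\in\nats} \searrow \leq f$; for the reverse, any admissible $\gendomain^\nats \ni \pr{g_n}_{n\in\nats} \searrow \leq f$ consists of bounded functions (because $\gendomain \subseteq \bfns\pr{\genset}$), so its pointwise limit $h$ is bounded, whence $h \leq f \wedge k$ for every integer $k \geq \max\st{0, \sup h}$---which makes this sequence admissible for the inner supremum at that $k$. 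This exchange of suprema is the main obstacle; everything else is a routine truncation argument, which is why the corollary is advertised as immediate.
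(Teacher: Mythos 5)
Your proposal is correct and follows essentially the same route as the paper's own proof: both obtain the formula on the bounded measurable functions from Theorem~3.10 of Denk, Kupper and Nendel, and then pass to a general \(f\in\genbbmeas\pr{\gendomain}\) via the truncations \(f\wedge k\), upward continuity, and the merging of the resulting double supremum into a single one. Your justification of that last exchange of suprema (any admissible decreasing sequence has a bounded limit, hence is admissible for \(f\wedge k\) for \(k\) large enough) is exactly the step the paper leaves implicit.
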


\section{Countable-State Uncertain Processes}
\label{sec:extension for stoch proc}

With \theoremref{the:convex extension} under our belts, we can move from the general setting of convex expectations to the more specific setting of sublinear expectations for uncertain processes.
With an uncertain process, I mean a `system' that assumes a state in each point of (continuous) time; a `subject' is uncertain about the state as it evolves over time, and we want to model her uncertainty through a sublinear expectation.

As announced by the title, we will focus on uncertain processes whose state takes values in a countable \emph{state space}~\(\stsp\).
An outcome or realisation of such an uncertain process is a map from \(\tset\) to~\(\stsp\), which we call a \emph{path}.
We make---or our subject makes---the common assumptions that (i) after it transitions to a new state, the system stays in its new state for some time; and (ii) that the number of state transitions in every bounded interval is finite.
This is equivalent to assuming that the outcomes are \emph{càdlàg} paths, meaning that they are continuous from the right and have left-sided limits---see, for example, \citep[Chapter~3, Section~5]{1986EthierKurtz-Markov} or \citep[Section~31.1]{1997Fristedt-Modern}.
Thus, our outcome space is the set of all càdlàg paths, which we denote by~\(\cadpths\subseteq\stsp^{\tset}\).
As will become clear at the end of this section, the assumption of càdlàg paths ensures that the domain of the constructed sublinear expectation is sufficiently rich.

Our starting point will be an upper expectation on the set of bounded functions on~\(\cadpths\) that are \emph{finitary}, meaning that they only depend on the states that a path~\(\pth\in\cadpths\) assumes in a finite set of time points.
To formalise this, it will be convenient to introduce some notation.

For all \(\ftset\subseteq\ftsetalt\subseteq\tset\), we let \(\pi_{\ftset}^{\ftsetalt}\colon\stsp^{\ftsetalt}\to\stsp^{\ftset}\) be the projection that maps any \(\stsp\)-valued function on~\(\ftsetalt\) to its restriction to~\(\ftset\) \citep[see][Section~II.25]{1994Rogers-Diffusions}; if \(\ftsetalt=\tset\), we simply write \(\pi_{\ftset}\) and moreover restrict the domain of \(\pi_{\ftset}\) to~\(\cadpths\subseteq\stsp^{\tset}\).
Given some \(\ftset=\st{t_1, \dots, t_n}\), we will sometimes identify \(x=\pr{x_t}_{t\in\ftset}\in\stsp^{\ftset}\) with the \(n\)-tuple~\(\pr{x_1, \dots, x_n}\coloneqq\pr{x\pr{t_1}, \dots, x\pr{t_n}}\in\stsp^n\) of its values.

With this notation, we can formally define the set of finitary bounded functions on~\(\cadpths\):
\begin{equation*}
  \fdomain
  \coloneqq \st[\big]{f\circ\pi_{\ftset}\colon \ftset\in\ftsets, f\in\bfns\pr{\stsp^{\ftset}}}.
\end{equation*}
It is easy to verify that \(\fdomain\subseteq\bfns\pr{\cadpths}\) is a linear lattice, so the results in \sectionref{ssec:extending nonlinear expectations} apply.

Before we get down to using \theoremref{the:alt sufficient condition for downward continuity}, we should establish that an upper expectation~\(\uprev\) on~\(\fdomain\) is in one-to-one correspondence with a `consistent collection of finite-dimensional upper expectations'; especially in \sectionref{ssec:from semigroup to fidi uexp} further on, it will be more convenient to work with the latter.
This notion, essentially taken from \citep[Definition~4.2]{2018DenkKupperNendel}, is the counterpart of the well-known notion of a `consistent collection of finite-dimensional distributions' \citep[Section~II.29]{1994Rogers-Diffusions} for sublinear expectations.
\begin{definition}
\label{def:fidis}
  A \emph{collection of finite-dimensional upper expectations} is a collection~\(\pr{\uprev_{\ftset}}_{\ftset\in\ftsets}\) such that for all \(\ftset\in\ftsets\), \(\uprev_{\ftset}\) is an upper expectation on~\(\bfns\pr{\stsp^{\ftset}}\).
  Such a collection is \emph{consistent} if for all \(\ftset,\ftsetalt\in\ftsets\) with \(\ftset\subseteq\ftsetalt\),
  \begin{equation*}
    \uprev_{\ftset}\pr{f}
    = \uprev_{\ftsetalt}\pr{f\circ \pi_{\ftset}^{\ftsetalt}}
    \quad\text{for all } f\in\bfns\pr{\stsp^{\ftset}}.
  \end{equation*}
\end{definition}

\begin{proposition}
\label{prop:fidis to upper expectation}
  Consider a consistent family~\(\pr{\uprev_{\ftset}}_{\ftset\in\ftsets}\) of finite-dimensional upper expectations.
  Then there is a unique upper expectation~\(\uprev\) on~\(\fdomain\) such that
  \begin{equation*}
    \uprev\pr{f\circ\pi_{\ftset}}
    = \uprev_{\ftset}\pr{f}
    \quad\text{for all } \ftset\in\ftsets, f\in\bfns\pr{\stsp^{\ftset}}.
  \end{equation*}
  Conversely, if \(\uprev\) is an upper expectation on~\(\fdomain\) and for all \(\ftset\in\ftsets\) we let
  \begin{equation*}
    \uprev_{\ftset}
    \colon \bfns\pr{\stsp^{\ftset}}
    \to \reals
    \colon f\mapsto \uprev_{\ftset}\pr{f}
    \coloneqq \uprev\pr{f\circ\pi_{\ftset}},
  \end{equation*}
  then~\(\pr{\uprev_{\ftset}}_{\ftset\in\ftsets}\) is a consistent collection of finite-dimensional upper expectations.
\end{proposition}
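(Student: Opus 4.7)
The plan is to handle the two directions separately, with the converse being routine and the forward direction hinging on a well-definedness argument via a common-refinement trick.

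For the converse, I would fix $\uprev$ on $\fdomain$ and define $\uprev_{\ftset}(f) \coloneqq \uprev(f \circ \pi_{\ftset})$. Since the map $f \mapsto f \circ \pi_{\ftset}$ from $\bfns(\stsp^{\ftset})$ into $\fdomain$ is linear, sends constants to constants, and is isotone, the functional $\uprev_{\ftset}$ inherits sublinearity, constant preservation and isotonicity from $\uprev$, so it is an upper expectation on $\bfns(\stsp^{\ftset})$. Consistency is then immediate from the chain rule $\pi_{\ftset} = \pi_{\ftset}^{\ftsetalt} \circ \pi_{\ftsetalt}$ whenever $\ftset \subseteq \ftsetalt$.

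The forward direction is where the actual content lies. Given a consistent family $\pr{\uprev_{\ftset}}_{\ftset \in \ftsets}$, I would define $\uprev(g) \coloneqq \uprev_{\ftset}(f)$ whenever $g = f \circ \pi_{\ftset}$. The main obstacle is that the representation of $g \in \fdomain$ is not unique: we may have $g = f_1 \circ \pi_{\ftset_1} = f_2 \circ \pi_{\ftset_2}$ with different $\ftset_i$ and $f_i$. To establish well-definedness, I would pass to the common refinement $\ftsetalt \coloneqq \ftset_1 \cup \ftset_2 \in \ftsets$ and observe that $f_1 \circ \pi_{\ftset_1}^{\ftsetalt} \circ \pi_{\ftsetalt} = f_2 \circ \pi_{\ftset_2}^{\ftsetalt} \circ \pi_{\ftsetalt}$ pointwise on $\cadpths$. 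The key small lemma here is that $\pi_{\ftsetalt}\colon \cadpths \to \stsp^{\ftsetalt}$ is surjective for every finite $\ftsetalt \in \ftsets$: for any $\pr{x_t}_{t\in\ftsetalt}$, the piecewise-constant path that steps through the values in the natural order is càdlàg. This surjectivity upgrades the pointwise equality on $\cadpths$ to $f_1 \circ \pi_{\ftset_1}^{\ftsetalt} = f_2 \circ \pi_{\ftset_2}^{\ftsetalt}$ on all of $\stsp^{\ftsetalt}$, and then consistency applied twice gives $\uprev_{\ftset_1}(f_1) = \uprev_{\ftsetalt}(f_1 \circ \pi_{\ftset_1}^{\ftsetalt}) = \uprev_{\ftsetalt}(f_2 \circ \pi_{\ftset_2}^{\ftsetalt}) = \uprev_{\ftset_2}(f_2)$.

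Once $\uprev$ is well defined, verifying that it is an upper expectation on $\fdomain$ amounts to the same common-refinement manoeuvre: given $g_1, g_2 \in \fdomain$, write them as $g_i = f_i \circ \pi_{\ftsetalt}$ for a single $\ftsetalt \in \ftsets$ (again invoking consistency to transport each function to the union of its time set with the other's), after which sublinearity, constant preservation and isotonicity of $\uprev$ follow directly from those of the upper expectation $\uprev_{\ftsetalt}$ on $\bfns(\stsp^{\ftsetalt})$; isotonicity in particular uses surjectivity of $\pi_{\ftsetalt}$ to lift the inequality $g_1 \leq g_2$ on $\cadpths$ to $f_1 \leq f_2$ on $\stsp^{\ftsetalt}$. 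Uniqueness of $\uprev$ is then automatic, since its value on every element of $\fdomain$ is prescribed by the defining equation. I expect the whole proof to be short and mostly bookkeeping; the only genuinely substantive point is the surjectivity of $\pi_{\ftsetalt}$, which is precisely where the càdlàg hypothesis on paths intervenes and without which well-definedness would fail.
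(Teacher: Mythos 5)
Your proposal is correct and follows the same route the paper takes (which it delegates to the proof of Proposition~4.4 in \citep{2018DenkKupperNendel}): well-definedness of \(\uprev\) via consistency on a common refinement, inheritance of the upper-expectation properties from the finite-dimensional ones, and the routine converse. You are also right to single out the surjectivity of \(\pi_{\ftsetalt}\colon\cadpths\to\stsp^{\ftsetalt}\) (witnessed by piecewise-constant paths) as the one point where the restriction to càdlàg paths must be checked, a detail the paper's citation-only proof leaves implicit.
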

\begin{proof}
  The first part of the statement follows almost immediately from the consistency of the family~\(\pr{\uprev_{\ftset}}_{\ftset\in\ftsets}\)---see for example the proof of Proposition~4.4 in~\cite{2018DenkKupperNendel}.
  The second part of the statement follows immediately from the properties of upper expectations.
\end{proof}

Our aim is to use \theoremref{the:convex extension} to extend the upper expectation~\(\uprev\) on the set~\(\fdomain\) of finitary bounded functions to the set~\(\genmeas\pr{\fdomain}\) of \(\sigma\pr{\fdomain}/\mathcal{B}\pr{\extreals}\)-measurable functions that are bounded below or above.
The reader who is familiar with (measure-theoretic) stochastic processes will probably wonder whether the \(\sigma\)-algebra \(\sigma\pr{\fdomain}\) generated by~\(\fdomain\) is equal to the \(\sigma\)-algebra~\(\sigma\pr{\cylevts}\) that is ordinarily used in that setting.
Let us argue that this is indeed the case.

A subset~\(\cyl\) of~\(\cadpths\) is a \emph{cylinder event} \cite[Definition~II.25.4]{1994Rogers-Diffusions}\footnote{
  In general, this definition assumes \(A\in\mathcal{X}^{\ftset}\), where \(\mathcal{X}\) is a \(\sigma\)-algebra on~\(\stsp\).
  Since \(\stsp\) is countable, \(\mathcal{X}=\wp\pr{\stsp}\) and \(\mathcal{X}^{\ftset}=\wp\pr{\stsp^{\ftset}}\).
} if there are some \(\ftset\in\ftsets\) and \(A\subseteq\stsp^{\ftset}\) such that
\begin{equation*}
  \cyl
  = \pi_{\ftset}^{-1}\pr{A}
  \coloneqq \st[\big]{\pth\in\cadpths\colon \pi_{\ftset}\pr{\pth}\in A}.
\end{equation*}
We collect all cylinder events in~\(\cylevts\).
It is easy to verify that
\begin{equation}
\label{eqn:cylevts from indicators}
  \cylevts
  = \st[\big]{\cyl\subseteq\cadpths\colon \indica{\cyl}\in \fdomain},
\end{equation}
and that \(\cylevts\) is an algebra of subsets of~\(\cadpths\); since \(\fdomain\) is a linear space, this implies that
\begin{equation*}
  \mathrm{span}\pr{\st{\indica{\cyl}\colon\cyl\in\cylevts}}
  \subseteq\fdomain.
\end{equation*}
It is a matter of definition chasing to verify that indeed
\begin{equation}
\label{eqn:sigma cylevts vs sigma fdomain}
  \sigma\pr{\cylevts}
  =\sigma\pr{\fdomain};
\end{equation}
the interested reader can find a proof in
\ifarxiv
  \appendixref{asec:proofs}.
\else
  the Supplementary Material.
\fi
There even is a third way of getting to~\(\sigma\pr{\fdomain}\): since the countable set~\(\stsp\) equipped with the discrete metric is a Polish space, \(\cadpths\) equipped with the so-called `Skorokhod metric' is a Polish space, and the Borel \(\sigma\)-algebra induced by this metric space is precisely~\(\sigma\pr{\cylevts}=\sigma\pr{\fdomain}\); the interested reader may consult Section~31.1 in~\citep{1997Fristedt-Modern} or Section~5 in~\citep[Chapter~3]{1986EthierKurtz-Markov} for the details.

It remains for us to introduce some notation that simplifies the statement of the main result of this section.
We denote the set of \(\sigma\pr{\fdomain}/\mathcal{B}\pr{\extreals}\)-measurable functions that are bounded below by~\(\bbmeas\coloneqq\genbbmeas\pr{\fdomain}\) and the set of those that are bounded above by~\(\bameas\coloneqq\genbameas\pr{\fdomain}\); to simplify our notation, we also let \(\meas\coloneqq\bbmeas\cup\bameas=\genmeas\pr{\gendomain}\).
For all \(\ftset=\st{t_1, t_2}\in\ftsets\), let \(d_{\ftset}^{\neq}\in\bfns\pr{\stsp^{\ftset}}\) be defined for all \(x\in\stsp^{\ftset}\) by \(d_{\ftset}^{\neq}\pr{x}\coloneqq 1\) if \(x\pr{t_1}\neq x\pr{t_2}\) and \(d_{\ftset}^{\neq}\pr{x}\coloneqq 0\) otherwise; similarly, let \(D_{\ftset}^{\neq}\coloneqq\st{x\in\stsp^{\ftset}\colon x\pr{t_1}\neq x\pr{t_2}}\) and note that \(d_{\ftset}^{\neq}=\indica{D_{\ftset}^{\neq}}\).
\begin{theorem}
\label{the:alt sufficient condition for downward continuity}
  Consider a consistent collection~\(\pr{\uprev_{\ftset}}_{\ftset\in\ftsets}\) of finite-dimensional upper expectations that are downward continuous.
  If for all \(n\in\nats\) there is some \(\lambda_n\in\nnegreals\) such that
  \begin{equation*}
    \limsup_{s\to t}\frac{\uprev_{\st{s,t}}\pr[\big]{d_{\st{s,t}}^{\neq}}}{\abs{s-t}}
    \leq \lambda_n
    \quad\text{for all } t\in\cci{0}{n},
  \end{equation*}
  then the corresponding upper expectation~\(\uprev\) of Proposition~\ref{prop:fidis to upper expectation} is downward continuous.
\end{theorem}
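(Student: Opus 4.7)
The plan is to combine \lemmaref{lem:downward for convex} with the classical Daniell--Kolmogorov construction, using the jump-rate hypothesis to guarantee that the resulting process concentrates on~\(\cadpths\). Since \(\uprev\) is an upper expectation, \lemmaref{lem:downward for convex} reduces the claim to showing that every linear expectation~\(\prev\in\prevs_{\fdomain}\) with \(\uprev^*\pr{\prev}<+\infty\)---equivalently, by \lemmaref{lem:representation}, every linear~\(\prev\) on~\(\fdomain\) dominated by~\(\uprev\)---is downward continuous.

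Fix such a dominated linear~\(\prev\). The converse part of Proposition~\ref{prop:fidis to upper expectation}, applied to~\(\prev\) (which is trivially an upper expectation), yields a consistent family~\(\pr{\prev_{\ftset}}_{\ftset\in\ftsets}\) of finite-dimensional linear expectations satisfying \(\prev_{\ftset}\pr{f}=\prev\pr{f\circ\pi_{\ftset}}\leq\uprev\pr{f\circ\pi_{\ftset}}=\uprev_{\ftset}\pr{f}\) for every \(f\in\bfns\pr{\stsp^{\ftset}}\). Applying \lemmaref{lem:downward for convex} at each finite-dimensional level---where each \(\uprev_{\ftset}\) is downward continuous by hypothesis---makes every \(\prev_{\ftset}\) downward continuous, so the Daniell--Stone Theorem furnishes a probability measure~\(\mu_{\ftset}\) on~\(\wp\pr{\stsp^{\ftset}}\) representing~\(\prev_{\ftset}\). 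The consistency of~\(\pr{\prev_{\ftset}}\) translates into Kolmogorov consistency of~\(\pr{\mu_{\ftset}}\), so the Daniell--Kolmogorov extension theorem produces a unique probability measure~\(\tilde{\prob}\) on the cylinder \(\sigma\)-algebra of~\(\stsp^{\tset}\) with those marginals.

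The main obstacle is to show that \(\tilde{\prob}\) sits on~\(\cadpths\), so that \(\prev\) arises from a probability measure~\(\prob_{\prev}\) on~\(\pr{\cadpths,\sigma\pr{\fdomain}}\). This is precisely where the hypothesis is used, through the elementary estimate
\begin{equation*}
  \mu_{\st{s,t}}\pr{D_{\st{s,t}}^{\neq}}
  = \prev_{\st{s,t}}\pr[\big]{d_{\st{s,t}}^{\neq}}
  \leq \uprev_{\st{s,t}}\pr[\big]{d_{\st{s,t}}^{\neq}},
\end{equation*}
which, combined with the assumed limsup bound, yields for each \(n\in\nats\) a local Lipschitz-type control of order~\(\lambda_n\) on the jump probability across short subintervals of~\(\cci{0}{n}\). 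A Borel--Cantelli argument along dyadic subdivisions of~\(\cci{0}{n}\)---in the spirit of the classical path-regularity theorems for countable-state Markov jump processes---then shows that \(\tilde{\prob}\)-almost every trajectory of the canonical process is càdlàg on~\(\cci{0}{n}\); a countable union over~\(n\) gives \(\tilde{\prob}^{*}\pr{\cadpths}=1\). Together with the identification \(\sigma\pr{\cylevts}=\sigma\pr{\fdomain}\) from~\eqref{eqn:sigma cylevts vs sigma fdomain}, this produces the desired~\(\prob_{\prev}\) on~\(\pr{\cadpths,\sigma\pr{\fdomain}}\) with \(\prev\pr{g}=\int g\,\mathrm{d}\prob_{\prev}\) for all~\(g\in\fdomain\).

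The conclusion is then routine: for any \(\fdomain^{\nats}\ni\pr{f_n}_{n\in\nats}\searrow f\in\fdomain\), the sequence is uniformly bounded by~\(\norm{f_1}\), so the Dominated Convergence Theorem applied to~\(\prob_{\prev}\) yields \(\prev\pr{f_n}\to\prev\pr{f}\). Every dominated linear~\(\prev\) is therefore downward continuous, and \lemmaref{lem:downward for convex} delivers the downward continuity of~\(\uprev\).
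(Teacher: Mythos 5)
Your reduction is exactly the paper's: by \lemmaref{lem:representation,lem:downward for convex} it suffices to show that every linear expectation~\(\prev\) on~\(\fdomain\) dominated by~\(\uprev\) is downward continuous; the countable additivity of the finite-dimensional marginals then comes from the downward continuity of the~\(\uprev_{\ftset}\), and the jump-rate bound is inherited by domination. Where you part ways with the paper is in how these two facts are converted into downward continuity of~\(\prev\). The paper stays on~\(\cadpths\) throughout: it forms the finitely additive set function~\(\altprob_{\prev}\) on the cylinder algebra~\(\cylevts\), uses \lemmaref{lem:derived set function} to equate downward continuity of~\(\prev\) with countable additivity of~\(\altprob_{\prev}\), and then applies \lemmaref{lem:prop24}, whose two hypotheses are precisely the two facts above. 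You instead go through the Daniell--Kolmogorov extension on~\(\stsp^{\tset}\) and try to push the resulting measure~\(\tilde{\prob}\) down onto~\(\cadpths\). In effect you are re-proving \lemmaref{lem:prop24} rather than invoking it, and that re-proof is where the gap lies.

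Two concrete problems. First, the ``classical path-regularity theorems for countable-state Markov jump processes'' you appeal to are proved for Markov processes, and a dominated~\(\prev\) need not be Markov; the oscillation-counting argument does survive without the Markov property (the number of sign changes along dyadic partitions of~\(\cci{0}{n}\) is nondecreasing in the resolution, and the limsup bound keeps its expectation below~\(n\lambda_n\), so it is almost surely finite), but this must be carried out, not cited by analogy. Second, and more seriously, finiteness of oscillations along a countable dense set~\(D\) only shows that \(\tilde{\prob}\)-almost every restriction~\(\pth\vert_{D}\) extends to a càdlàg function; to get \(\tilde{\prob}^{*}\pr{\cadpths}=1\) you must in addition show that, for every~\(t\), the coordinate at~\(t\) agrees almost surely with the right limit along~\(D\), i.e.\ right-continuity in probability at every time point. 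This does follow from your hypothesis, since the limsup bound forces \(\mu_{\st{s,t}}\pr{D_{\st{s,t}}^{\neq}}\to 0\) as \(s\to t\), but your sketch never uses it for this purpose, and without it the outer-measure claim fails: a measurable superset of~\(\cadpths\) may pin the coordinate at a fixed \(t_{0}\notin D\) to the right limit along~\(D\), and nothing in the oscillation argument alone gives that event full measure. Once a measure on~\(\pr{\cadpths,\sigma\pr{\fdomain}}\) representing~\(\prev\) on~\(\fdomain\) is actually in hand, your dominated-convergence conclusion is fine---it is essentially the easy direction of \lemmaref{lem:derived set function}. So the architecture is sound and parallel to the paper's, but the central analytic step is asserted rather than proved.
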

A formal proof for this result can be found in
\ifarxiv
  \appendixref{asec:proofs},
\else
  the Supplementary Material,
\fi
I will only provide a sketch here.

Our starting point is the sublinear expectation~\(\uprev\) on~\(\fdomain\) of Proposition~\ref{prop:fidis to upper expectation}.
To show that this upper expectation~\(\uprev\) is downward continuous, we recall from \lemmaref{lem:representation,lem:downward for convex} that it suffices to show that every dominated linear expectation~\(\prev\in\prevs_{\fdomain}\)---so with \(\prev\pr{g}\leq\uprev\pr{g}\) for all \(g\in\fdomain\)---is downward continuous.

If we were to use the set of all paths~\(\stsp^{\tset}\) as possibility space instead of the set of càdlàg paths~\(\cadpths\), then a linear expectation~\(\prev\) on the set of `finitary functions'~\(\fdomain\) would be downward continuous if and only if for all \(\ftset\in\ftsets\), the derived set function~\(\prob_{\prev}^{\ftset}\colon \wp\pr{\stsp^{\ftset}}\to\cci{0}{1}\colon A\mapsto\prev\pr[\big]{\indica{\pi_{\ftset}^{-1}\pr{A}}}\) is countably additive---this essentially follows from Theorem~2 in \citep{2022Erreygers}, see also \lemmaref{lem:derived set function} further on.
For any dominated linear expectation~\(\prev\in\prevs_{\fdomain}\), this would be guaranteed by the assumption in \theoremref{the:alt sufficient condition for downward continuity} that \(\uprev_{\ftset}\) is downward continuous for all \(\ftset\in\ftsets\).
Note that this is (almost) exactly the approach that \citet{2018DenkKupperNendel} take to prove their Theorem~4.6.

With the set of càdlàg paths~\(\cadpths\) as possibility space, things are more difficult.
For a linear expectation~\(\prev\) on~\(\fdomain\) to be downward continuous, it is still necessary but no longer sufficient that the derived set function~\(\prob_{\prev}^{\ftset}\colon \wp\pr{\stsp^{\ftset}}\to\cci{0}{1}\colon A\mapsto\prev\pr[\big]{\indica{\pi_{\ftset}^{-1}\pr{A}}}\) is downward continuous.
One could obtain a necessary and sufficient condition by combining Theorem~10 in \citep{2022Erreygers} with \lemmaref{lem:derived set function} further on.
Here, however, we will rely on the sufficient condition in \citep[Proposition~24]{2022Erreygers}; we rephrase this result to the context here with the help of Proposition~1, Eqn.~(5) and Theorem~10 there.
\begin{lemma}
\label{lem:prop24}
  Consider a set function~\(\altprob\colon\cylevts\to\cci{0}{1}\) that is finitely additive with \(\altprob\pr{\cadpths}=1\).
  If
  \begin{enumerate}[label=\upshape(\roman*)]
    \item for all \(\ftset\in\ftsets\), the derived set function
      \begin{equation*}
        \altprob^{\ftset}
        \colon \wp\pr{\stsp^{\ftset}}\to\cci{0}{1}\colon
        A\mapsto
        \altprob\pr[\big]{\pi_{\ftset}^{-1}\pr{A}}
      \end{equation*}
      is countably additive; and
      \item for all \(n\in\nats\), there is some \(\lambda_n\in\nnegreals\) such that
      \begin{equation*}
        \limsup_{s\to t} \frac{\altprob^{\st{t,s}}\pr{D_{\st{t,s}}^{\neq}}}{\abs{s-t}}
        \leq \lambda_n
        \quad\text{for all } t\in\cci{0}{n};
      \end{equation*}
  \end{enumerate}
  then \(\altprob\) is countably additive.
\end{lemma}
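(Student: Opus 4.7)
The plan is to reduce the claim directly to Proposition~24 of \citep{2022Erreygers}, which the surrounding text explicitly identifies as the underlying result. First I would verify that $\altprob$ is an admissible starting object for that proposition: it is a finitely additive set function of total mass~1 on the algebra~$\cylevts$ of cylinder events over the countable state space~$\stsp$ (which, endowed with the discrete metric, is Polish), with $\cadpths$ the set of càdlàg paths. This is precisely the setting treated in \citep{2022Erreygers}.

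Next I would translate the two hypotheses into those of Proposition~24. Hypothesis~(i) here is already phrased in terms of the finite-dimensional marginals $\altprob^{\ftset}\colon\wp\pr{\stsp^{\ftset}}\to\cci{0}{1}$; Proposition~1 and Eqn.~(5) of \citep{2022Erreygers} formalise the relationship between the algebra~$\cylevts$ and the pushforwards of $\altprob$ along the projections~$\pi_{\ftset}$, so hypothesis~(i) corresponds one-to-one with the finite-dimensional countable-additivity assumption of Proposition~24. Hypothesis~(ii), the uniform $\limsup$-bound on $\altprob^{\st{s,t}}\pr{D_{\st{s,t}}^{\neq}}/\abs{s-t}$, is exactly the local oscillation condition used in Proposition~24: it bounds, to first order in $\abs{s-t}$, the probability of a change of state in a small time window, which is the standard ingredient for ruling out the sequences of cylinder events that would obstruct countable additivity in the càdlàg setting.

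With both hypotheses matched, a direct application of Proposition~24 of \citep{2022Erreygers} yields countable additivity of~$\altprob$ on~$\cylevts$. In case Proposition~24 is actually stated as a continuity property (for instance, continuity at~$\emptyset$ along decreasing sequences in~$\cylevts$), I would invoke Theorem~10 of the same paper to upgrade that continuity on the algebra to outright countable additivity, which is the conclusion we require.

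The main obstacle is essentially bureaucratic rather than mathematical: carefully aligning the notation and standing assumptions of \citep{2022Erreygers}, which are developed for a general Polish state space and a general \emph{alphabet} of measurable sets, with the countable-state simplifications in force here (every subset of~$\stsp^{\ftset}$ is automatically measurable, so the marginals~$\altprob^{\ftset}$ live on~$\wp\pr{\stsp^{\ftset}}$ without any hidden measurability conditions). Once this translation is pinned down, the lemma is immediate from the cited results.
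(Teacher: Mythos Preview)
Your proposal is correct and matches the paper's treatment exactly: the paper does not give an independent proof but states the lemma as a direct rephrasing of \citep[Proposition~24]{2022Erreygers} ``with the help of Proposition~1, Eqn.~(5) and Theorem~10 there,'' which are precisely the auxiliary results you invoke for the translation. Your anticipated use of Theorem~10 to pass between continuity at~$\emptyset$ and countable additivity is also in line with the paper's own description of the rephrasing.
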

We can restrict our attention to the derived set function~\(\prob_{\prev}\) due to the following measure-theoretical lemma;
\ifarxiv
I could not immediately find a reference for it, so I give a formal proof in \appendixref{asec:proofs}.
\else
I could not immediately find a reference for it, so I give a formal proof in the Supplementary Material.
\fi
\begin{lemma}
\label{lem:derived set function}
  Consider a linear expectation~\(\prev\) on~\(\fdomain\).
  Then the derived non-negative set function
  \begin{equation*}
    \altprob_{\prev}
    \colon \cylevts\to\cci{0}{1}
    \colon \cyl\mapsto \prev\pr{\indica{\cyl}}.
  \end{equation*}
  is finitely additive with \(\altprob_{\prev}\pr{\cadpths}=1\).
  Furthermore, \(\prev\) is downward continuous if and only if \(\altprob_{\prev}\) is countably additive; whenever this is the case, \(\altprob_{\prev}=\prob_{\prev}\vert_{\cylevts}\).
\end{lemma}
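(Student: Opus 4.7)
My plan is to split the statement into three parts and treat the forward implication of the equivalence together with the last clause via an appeal to the Daniell--Stone theorem, so that the bulk of the work lies in the converse.

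First I would verify the elementary properties of $\altprob_{\prev}$. Since $\prev$ is a linear expectation, its isotonicity yields $0=\prev\pr{0}\leq\prev\pr{\indica{\cyl}}\leq\prev\pr{1}=1$ for every $\cyl\in\cylevts$, so $\altprob_{\prev}$ takes values in $\cci{0}{1}$, and normalisation gives $\altprob_{\prev}\pr{\cadpths}=\prev\pr{\indica{\cadpths}}=\prev\pr{1}=1$. For finite additivity, note that any two disjoint $\cyl_1,\cyl_2\in\cylevts$ satisfy $\indica{\cyl_1\cup\cyl_2}=\indica{\cyl_1}+\indica{\cyl_2}\in\fdomain$, and additivity of $\prev$ transfers directly.

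For the forward implication together with the identification $\altprob_{\prev}=\prob_{\prev}\vert_{\cylevts}$, suppose $\prev$ is downward continuous on the linear lattice $\fdomain\subseteq\bfns\pr{\cadpths}$. The Daniell--Stone theorem, invoked in the excerpt just above \theoremref{the:convex extension}, then yields a unique probability measure $\prob_{\prev}$ on $\sigma\pr{\fdomain}$ with $\prev\pr{f}=\int f\mathrm{d}\prob_{\prev}$ for every $f\in\fdomain$. Since $\cylevts\subseteq\sigma\pr{\fdomain}$ by \eqref{eqn:sigma cylevts vs sigma fdomain} and $\indica{\cyl}\in\fdomain$ for each $\cyl\in\cylevts$, specialising gives $\altprob_{\prev}\pr{\cyl}=\prev\pr{\indica{\cyl}}=\prob_{\prev}\pr{\cyl}$, so $\altprob_{\prev}=\prob_{\prev}\vert_{\cylevts}$ and in particular $\altprob_{\prev}$ is countably additive.

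The converse is the main obstacle, because we must climb from additivity of a set function on the algebra $\cylevts$ back to continuity of $\prev$ on the whole of $\fdomain$. Assuming $\altprob_{\prev}$ countably additive, Carathéodory extension produces a probability measure $\mu$ on $\sigma\pr{\cylevts}=\sigma\pr{\fdomain}$ agreeing with $\altprob_{\prev}$ on $\cylevts$. My plan is to show that $\prev\pr{f}=\int f\mathrm{d}\mu$ for every $f\in\fdomain$, which by dominated convergence on the bounded functions of $\fdomain$ immediately yields downward (in fact even upward) continuity of $\prev$. Fix $f=g\circ\pi_{\ftset}\in\fdomain$ with $g\in\bfns\pr{\stsp^{\ftset}}$. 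Since $\stsp^{\ftset}$ is countable, enumerate it as $\pr{y_i}_{i\in\nats}$ and set $g_n\coloneqq\sum_{i=1}^n g\pr{y_i}\indica{\st{y_i}}$, so that $g_n\circ\pi_{\ftset}\in\fdomain$ is a finite linear combination of cylinder-event indicators. Linearity of $\prev$ and the definition of $\altprob_{\prev}$ give
\begin{equation*}
  \prev\pr{g_n\circ\pi_{\ftset}}
  = \sum_{i=1}^n g\pr{y_i}\altprob_{\prev}\pr[\big]{\pi_{\ftset}^{-1}\pr{\st{y_i}}}
  = \int g_n\mathrm{d}\pr{\pi_{\ftset}}_{*}\mu,
\end{equation*}
and the same identity holds with $\mu$ and the pushforward on the right since the pushforward coincides with the $\ftset$-marginal of $\altprob_{\prev}$ on $\wp\pr{\stsp^{\ftset}}$. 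The error $\pr{g-g_n}\circ\pi_{\ftset}$ is dominated in absolute value by $\norm{g}\indica{\cyl_n}$, where $\cyl_n\coloneqq\pi_{\ftset}^{-1}\pr{\st{y_i\colon i>n}}\in\cylevts$ satisfies $\cyl_n\downarrow\emptyset$; countable additivity of $\altprob_{\prev}$ forces $\altprob_{\prev}\pr{\cyl_n}\to 0$, and isotonicity plus linearity of $\prev$ together with dominated convergence on the right give $\prev\pr{f}=\int g\mathrm{d}\pr{\pi_{\ftset}}_{*}\mu=\int f\mathrm{d}\mu$, completing the argument.
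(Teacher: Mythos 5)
Your proof is correct, and the first two parts (finite additivity, normalisation, and the forward implication together with $\altprob_{\prev}=\prob_{\prev}\vert_{\cylevts}$ via the Daniell--Stone Theorem) coincide with the paper's argument. For the converse, however, you take a genuinely different route. The paper argues directly: given $\fdomain^{\nats}\ni\pr{f_n}_{n\in\nats}\searrow f\in\fdomain$, it considers $f_n-f\searrow0$, fixes $\epsilon>0$, observes that $\cyl_n\coloneqq\st{\pth\in\cadpths\colon f_n\pr{\pth}-f\pr{\pth}>\epsilon}$ is a cylinder event decreasing to $\emptyset$, bounds $f_n-f\leq\epsilon+\norm{f_1-f}\indica{\cyl_n}$, and lets countable additivity (in its decreasing-to-empty formulation) finish the job. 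You instead extend $\altprob_{\prev}$ by Carath\'eodory to a measure $\mu$ on $\sigma\pr{\cylevts}=\sigma\pr{\fdomain}$ and prove the integral representation $\prev\pr{f}=\int f\,\mathrm{d}\mu$ on all of $\fdomain$, exploiting the countability of $\stsp^{\ftset}$ to approximate $g$ by finitely supported functions and controlling the tail by $\norm{g}\altprob_{\prev}\pr{\cyl_n}$ with $\cyl_n\searrow\emptyset$; downward continuity then drops out of dominated convergence. Your argument is heavier (Carath\'eodory extension, image measures) and leans on the countable-state assumption in a way the paper's level-set trick does not, but it buys a strictly stronger conclusion: the full integral representation of $\prev$ on $\fdomain$ and hence upward continuity as well, which the paper's $\epsilon$-argument does not deliver. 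Both are valid; just note that your route would not survive a move to an uncountable state space, whereas the paper's would.
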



I would like to emphasise that while the assumption that the outcomes are càdlàg paths makes it more difficult to prove downward continuity, it is also crucial to ensure that the domain~\(\meas\) is sufficiently rich.
Indeed, it is well-known---see, for example, \citep[Section~3.2]{2022Erreygers} or \citep[Lemma~II.25.9]{1994Rogers-Diffusions}---that with the set~\(\stsp^{\tset}\) of all paths as outcome space, the \(\sigma\)-algebra~\(\sigma\pr{\fdomain}=\sigma\pr{\cylevts}\) generated by the cylinder events only contains events that depend on the state of the system in a countable subset of~\(\tset\).
This then implies that~\(\meas\) essentially only contains functions that depend on the state of the system in a countable subset of~\(\tset\), excluding important functions like stopping times and time averages.

In light of the material in \sectionref{sec:nonlinear expectations}, the obvious question is whether \theoremref{the:alt sufficient condition for downward continuity} also holds if we relax the requirement of sublinearity to convexity, so starting from a `consistent collection~\(\pr{\cprev_{\ftset}}_{\ftset\in\ftsets}\) of finite-dimensional convex expectations that are downward continuous.'
Following the same argument as before, \lemmaref{lem:representation,lem:downward for convex} tell us that every linear expectation~\(\prev\in\prevs_{\fdomain}\) with \(\cprev^*\pr{\prev}<+\infty\)---where the convex expectation~\(\cprev\) on~\(\fdomain\) is the unique one that corresponds to \(\pr{\cprev_{\ftset}}_{\ftset\in\ftsets}\) in the sense of Proposition~\ref{prop:fidis to upper expectation} (generalised from sublinear to convex expectations)---should be downward continuous.
To this end, we note that for any such linear expectation~\(\prev\), it follows from \lemmaref{lem:representation} that for all \(s,t\in\tset\) such that \(s\neq t\),
\begin{equation*}
  \frac{\prev\pr{\pi^{-1}_{\st{s,t}}\circ d^{\neq}_{s,t}}}{\abs{s-t}}
  \leq\frac{\cprev_{\st{s,t}}\pr{d^{\neq}_{s,t}}}{\abs{s-t}} + \frac{\cprev^*\pr{\prev}}{\abs{s-t}},
\end{equation*}
and the second term on the right hand side grows to~\(+\infty\) as \(s\) approaches~\(t\).
In other words, we cannot follow the same strategy as before because the presence of~\(\cprev^*\pr{\prev}\) in this inequality does not allow us to invoke \lemmaref{lem:prop24,lem:derived set function}.
One way to alleviate this issue is to rely on the necessary and sufficient condition of Theorem~10 in \citep{2022Erreygers} instead of the sufficient condition of Proposition~24 in \citep{2022Erreygers} (rephrased here as \lemmaref{lem:prop24}), but this would certainly need a different condition on~\(\pr{\cprev_{\ftset}}_{\ftset\in\ftsets}\) as the one in \theoremref{the:alt sufficient condition for downward continuity}; I leave this as an interesting avenue of future research.

\section{Operators, Semigroups and Beyond}
\label{sec:operators semigroups and beyond}
One way to construct a consistent collection of finite-dimensional upper expectations is by glueing together an `initial upper expectation' and an `upper transition semigroup'.
This approach, which is also followed by \citet{2018DenkKupperNendel} and \citet{2020Nendel,2021Nendel}, is inspired by and a generalisation of the well-known construction method for Markov processes, where an `initial distribution' is glued to a `Markov semigroup' to end up with a consistent collection of finite-dimensional distributions (and then a probability measure)---see, for example, \citep[Section~III.7]{1994Rogers-Diffusions} or \citep[Chapter~31]{1997Fristedt-Modern} for more details.

We will get to this construction method in \sectionref{ssec:from semigroup to fidi uexp} further on, but only after we have discussed general operators in \sectionref{ssec:operators} and sublinear Markov semigroups in \sectionref{ssec:sublinear markov semigroups}.

\subsection{Operators}
\label{ssec:operators}
To simplify our notation, let us write \(\bfnsstsp\coloneqq\bfns\pr{\stsp}\).
An \emph{operator}~\(\genop\) is a map from \(\bfnsstsp\) to \(\bfnsstsp\), and we collect all operators in~\(\genops\).
One important operator is the \emph{identity operator}~\(\eye\), which maps any \(f\in\bfnsstsp\) to itself.
For any operator~\(\genop\in\genops\), it is common \cite[see][Section~3.2]{1976Martin-Nonlinear} to call
\begin{equation*}
  \opnorm{\genop}
  \coloneqq \sup\st*{\frac{\norm{\genop f}}{\norm{f}}\colon f\in\bfnsstsp, f\neq 0}
\end{equation*}
its \emph{operator seminorm}; we call an operator~\(\genop\) \emph{bounded} if \(\opnorm{\genop}<+\infty\), and collect all bounded operators in~\(\bops\).
It is easy to verify that \(\opnorm{\noarg}\) is a seminorm on~\(\bops\).
Furthermore, with a bit more work one can verify that
\begin{equation*}
  \zopnorm{\noarg}
  \colon \bops\to\nnegreals
  \colon \genop\mapsto \norm{\genop 0} + \opnorm{\genop}
\end{equation*}
is a norm, and that the space~\(\pr{\bops, \zopnorm{\noarg}}\) is a Banach space; the proof is essentially the same as Martin's~\citep[Section~III.2]{1976Martin-Nonlinear}, who instead of the operator seminorm~\(\opnorm{\noarg}\) considers the Lipschitz seminorm.
Clearly, the identity operator~\(\eye\) is bounded with \(\zopnorm{\eye}=\opnorm{\eye}=1\).

We extend (almost) all of the terminology regarding properties of functionals to operators in the obvious `componentwise' manner; for example, an operator~\(\genop\in\genops\) is called subadditive if for all \(x\in\stsp\), the component functional
\begin{equation*}
  \br{\genop\noarg}\pr{x}\colon\bfnsstsp\to\reals
  \colon f\mapsto \br{\genop f}\pr{x}
\end{equation*}
is subadditve.
If \(\genop\in\bops\) is positively homogeneous, then \(\genop 0=0\) and
\begin{equation*}
  \zopnorm{\genop}
  = \opnorm{\genop}
  = \sup\st*{\norm{\genop f}\colon f\in\bfnsstsp, \norm{f}=1},
\end{equation*}
which is in accordance with the operator norm for positively homogeneous operators used in \citep[Eqn.~(1)]{2017KrakDeBockSiebes} and \citep[Eqn.~(4)]{2017DeBock} and the standard norm for linear---additive and homogeneous---operators \cite[Section~23.1]{1997Schechter-Handbook}.

There are two classes of (positively homogeneous) operators that play an important role in the remainder, and we will see in \sectionref{ssec:examples} further on that these are closely connected.
The first class is that of the \emph{upper transition operators}: those operators~\(\utranop\) such that for all \(x\in\stsp\), the corresponding component functional \(\br{\utranop\noarg}\pr{x}\) is an upper expectation (so isotone, constant preserving, subadditive and positively homogeneous); this terminology goes back to \citet[Section~8]{2008deCoomanHermans}, but note that \citet[Definition~5.1]{2018DenkKupperNendel} prefer the term `sublinear kernel'.
It follows immediately from the properties of upper expectations \citep[see][Theorem~4.13]{2014TroffaesDeCooman-Lower} that for any upper transition operator~\(\utranop\),
\begin{equation}
\label{eqn:utranop:contraction}
  \norm{\utranop f-\utranop g}
  \leq\norm{f-g}
  \quad\text{for all } f,g\in\bfnsstsp.
\end{equation}
We will be particularly concerned with upper transition operators that are downward continuous; one example is the identity operator~\(\eye\).
Note that this requirement of downward continuity is trivially satisfied whenever the state space~\(\stsp\) is finite: in this case pointwise convergence implies uniform convergence, and every upper expectation is well known to be continuous (so in particular from above) with respect to uniform convergence \citep[Theorem~4.13~(xiii)]{2014TroffaesDeCooman-Lower}.

The second important class are those operators~\(\genop\) that satisfy the \emph{positive maximum principle},\footnote{
  After \citet[Section~1.2]{1965Courrege}, see also \citep[Chapter~4, Section~2]{1986EthierKurtz-Markov} or \citep[Lemma~III.6.8]{1994Rogers-Diffusions}.
} meaning that \(\br{\genop f}\pr{x}\leq0\) for all \(f\in\bfnsstsp\) and \(x\in\stsp\) such that \(\sup f=f\pr{x}\geq0\).
Of special interest is the subclass of \emph{upper rate operators}, which are the sublinear and positively homogeneous operators~\(\urateop\) that map constant functions to~\(0\) and satisfy the positive maximum principle---see \citep[Definition~2.1 and Theorem~2.5]{2020Nendel}, \citep[Definition~5]{2017DeBock} or \citep[Definition~7.2]{2017KrakDeBockSiebes}.
A \emph{rate operator} is an upper rate operator that is linear.

In the case of a finite state space, \citet[Eqn.~(9)]{2017DeBock} shows that any upper rate operator is necessarily bounded.
Moreover, \citet[Eqn.~(38) and Proposition~7.6]{2017KrakDeBockSiebes} show that any bounded set~\(\mathcal{Q}\) of rate operators induces an upper rate operator through its pointwise upper envelope~\(\urateop_{\mathscr{Q}}\in\bops\), which maps \(f\in\bfnsstsp\) to
\begin{equation*}
  \urateop_{\mathscr{Q}} f
  \colon \stsp\mapsto \reals
  \colon x\mapsto \sup\st[\big]{\br{\rateop f}\pr{x}\colon \rateop\in\mathscr{Q}};
\end{equation*}
conversely, any upper rate operator~\(\urateop\) is the upper envelope of the corresponding (bounded) set of dominated rate operators
\begin{equation*}
  \mathscr{Q}_{\urateop}
  \coloneqq \st[\big]{\rateop\in\mathfrak{Q}\colon \pr{\forall f\in\bfnsstsp}~\rateop f\leq \urateop f},
\end{equation*}
where \(\mathfrak{Q}\) is the set of all rate operators.
These results also hold for bounded upper rate operators in the case that \(\stsp\) is not finite, but due to lack of space we will not pursue this matter.

\subsection{Sublinear Markov Semigroups}
\label{ssec:sublinear markov semigroups}
A \emph{semigroup} is a family~\(\pr{\genop_t}_{t\in\tset}\) of operators such that
\begin{enumerate}[label=SG\arabic*., ref=\upshape(SG\arabic*), leftmargin=*]
  \item\label{def:utsg:semigroup} \(\genop_{s+t}=\genop_{s}\genop_{t}\) for all \(s,t\in\posreals\), and
  \item\label{def:utsg:identity} \(\genop_0=\eye\);
\end{enumerate}
such a semigroup~\(\pr{\genop_t}_{t\in\tset}\) is \emph{strongly continuous} if
\begin{equation*}
  \lim_{t\searrow 0} \norm{\genop_tf-\genop_0f}
  = 0
  \quad\text{for all } f\in\bfnsstsp
\end{equation*}
and, whenever \(\genop_t\in\bops\) for all \(t\in\tset\), called \emph{uniformly continuous} if
\begin{equation*}
  \lim_{t\searrow 0} \zopnorm{\genop_t-\genop_0}
  = 0.
\end{equation*}

We will only be concerned with semigroups~\(\pr{\utranop_t}_{t\in\posreals}\) of upper transition operators, which we will briefly call \emph{upper transition semigroups}; in this context, the semigroup property~\ref{def:utsg:semigroup} is often called the `Chapman-Kolmogorov equation.'
A \emph{sublinear Markov semigroup},\footnote{
  This definition generalises the notion of a `Markov semigroup' in \citep[Section~III.3]{1994Rogers-Diffusions} to the sublinear case.
} then, is an upper transition semigroup~\(\pr{\utranop_t}_{t\in\tset}\) such that for all \(t\in\posreals\), \(\utranop_t\) is downward continuous.

Due to Eqn.~\eqref{eqn:utranop:contraction}, an upper transition semigroup~\(\pr{\utranop_t}_{t\in\tset}\) is almost a special case of what \citet[Chapter~3]{1992Miyadera-Nonlinear} and \citet[Chapter~III]{1976Barbu-Nonlinear} call a `semigroup of contractions'; the only property that is missing is strong continuity.
That said, we need a---at first sight---slightly different notion of continuity than strong or uniform continuity: we say that an upper transition semigroup~\(\pr{\utranop_t}_{t\in\tset}\) \emph{has uniformly bounded rate} (after \citet[Eqn.~(2.35)]{1991Anderson-Continuous}) if
\begin{equation}
\label{eqn:uniformly bounded rate}
  \limsup_{t\searrow0} \frac1{t}\sup\st[\big]{\br[\big]{\utranop_t\pr{1-\indica{x}}}\pr{x}\colon x\in\stsp}
  < +\infty;
\end{equation}
due to constant additivity, this implies that
\begin{equation}
\label{eqn:uniformly continuous}
  \lim_{t\searrow0} \sup\st[\big]{1+\br{\utranop_t\pr{-\indica{x}}}\pr{x}\colon x\in\stsp}
  = 0.
\end{equation}
The condition of uniformly bounded rate might seem rather strong, but we are not the first to encounter its usefulness and/or necessity; this will become clear in~\sectionref{ssec:examples} further on, where we give some examples of (sub)linear Markov semigroups that satisfy this condition.

\subsection{From Upper Transition Semigroup to Finite-Dimensional Upper Expectations}
\label{ssec:from semigroup to fidi uexp}
Now that we have properly introduced the necessary concepts, we can get down to glueing an initial upper expectation to a semigroup of upper transition operators in such a way that we end up with a consistent collection of finite-dimensional upper expectations.
\begin{proposition}
\label{prop:from semigroup to fidis}
  Consider an upper expectation~\(\uprev_0\) on~\(\bfnsstsp\) and an upper transition semigroup~\(\pr{\utranop_t}_{t\in\tset}\).
  Then there is a unique consistent collection~\(\pr{\uprev_{\ftset}}_{\ftset\in\ftsets}\) of finite-dimensional upper expectations such that
  \begin{enumerate}[label=\upshape(\roman*), ref=\upshape(\roman*)]
    \item\label{prop:from semigroup to fidis:zero} \(\uprev_{\st{0}}\pr{f}=\uprev_0\pr{f}\) for all \(f\in\bfns\pr{\stsp}\); and
    \item\label{prop:from semigroup to fidis:backwards recursion} for all \(\ftset=\st{s_1, \dots, s_n}\in\ftsets\), \(t\in\tset\) such that \(s_1<\cdots<s_n<t\) and \(f\in\bfns\pr{\stsp^{\ftset\cup\st{t}}}\),
    \begin{equation*}
      \uprev_{\ftset\cup\st{t}}\pr{f}
      = \uprev_{\ftset}\pr{g},
    \end{equation*}
    where \(g\in\bfns\pr{\stsp^{\ftset}}\) maps \(x=\pr{x_s}_{s\in\ftset}\in\stsp^{\ftset}\) to
    \begin{equation*}
      g\pr{x}
      \coloneqq \br[\big]{\utranop_{t-s_n}f\pr{x_{s_1}, \dots, x_{s_n}, \noarg}}\pr{x_{s_n}}.
    \end{equation*}
  \end{enumerate}
\end{proposition}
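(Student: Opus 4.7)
My plan is to construct the family explicitly by iterating the upper transition operators along~\(\ftset\) and then verify the prescribed properties. Fix \(\ftset = \st{t_1 < \cdots < t_n} \in \ftsets\) and \(f \in \bfns\pr{\stsp^{\ftset}}\). Define bounded auxiliary functions \(g_n, \dots, g_1\) on \(\stsp^n, \dots, \stsp\) by setting \(g_n \coloneqq f\) and
\[
  g_{k-1}\pr{x_1, \dots, x_{k-1}} \coloneqq \br[\big]{\utranop_{t_k - t_{k-1}} g_k\pr{x_1, \dots, x_{k-1}, \noarg}}\pr{x_{k-1}} \quad\text{for } k = n, \dots, 2,
\]
and finally set \(\uprev_{\ftset}\pr{f} \coloneqq \uprev_0\pr{\utranop_{t_1} g_1}\) (the formula covers \(t_1 = 0\) as well since \(\utranop_0 = \eye\)). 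Each \(g_k\) is bounded because \(\utranop_t 0 = 0\) by positive homogeneity, so \(\norm{\utranop_t h} \leq \norm{h}\) for all \(h \in \bfnsstsp\) by Eqn.~\eqref{eqn:utranop:contraction}.

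I would then verify that every \(\uprev_{\ftset}\) so defined is an upper expectation on~\(\bfns\pr{\stsp^{\ftset}}\) and that the two boundary conditions hold. Isotonicity, constant preservation, subadditivity and positive homogeneity propagate through the recursion componentwise, because \(\uprev_0\) and each functional \(\br{\utranop_t \noarg}\pr{x}\) has these properties. Condition~\ref{prop:from semigroup to fidis:zero} is immediate: for \(\ftset = \st{0}\), the recursion collapses to \(g_1 = f\) and \(\utranop_0 g_1 = f\), so \(\uprev_{\st{0}}\pr{f} = \uprev_0\pr{f}\). Condition~\ref{prop:from semigroup to fidis:backwards recursion} is equally direct: for \(\ftset \cup \st{t}\) the first step of the recursion produces exactly the function~\(g\) of the statement, after which the remaining steps are those used to compute~\(\uprev_{\ftset}\pr{g}\).

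The main obstacle is proving consistency, i.e.\ \(\uprev_{\ftset}\pr{f} = \uprev_{\ftsetalt}\pr{f \circ \pi_{\ftset}^{\ftsetalt}}\) whenever \(\ftset \subseteq \ftsetalt\). By induction on \(\card{\ftsetalt \setminus \ftset}\) it suffices to insert a single time~\(s\). If \(s > \max\ftset\), then the first iteration of the recursion for~\(\ftsetalt\) acts on a function that is constant in the final coordinate; constant preservation of \(\utranop_{s - \max\ftset}\) renders this step trivial, and the remaining iterations compute \(\uprev_{\ftset}\pr{f}\). If \(s\) lies strictly between two successive elements \(t_{k-1} < t_k\) of~\(\ftset\), the iterations processing times above~\(t_k\) in the two recursions coincide up to a dummy argument at the position of~\(s\); the two extra steps for~\(\ftsetalt\) amount to applying \(\utranop_{s - t_{k-1}} \utranop_{t_k - s}\) to a function on~\(\stsp\), which equals \(\utranop_{t_k - t_{k-1}}\) by the semigroup property~\ref{def:utsg:semigroup}, so the rest of the two recursions match. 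The cases \(s < t_1\) and \(s = 0\) are handled analogously via \(\utranop_{t_1} = \utranop_s \utranop_{t_1 - s}\) and the identity \(\utranop_0 = \eye\).

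Uniqueness finally follows because any candidate collection satisfying~\ref{prop:from semigroup to fidis:zero} and~\ref{prop:from semigroup to fidis:backwards recursion} must agree with the constructed one: iterating~\ref{prop:from semigroup to fidis:backwards recursion} reduces \(\uprev_{\ftset}\pr{f}\) to \(\uprev_{\st{t_1}}\pr{g_1}\) for the very same \(g_1\) built above, and consistency together with~\ref{prop:from semigroup to fidis:backwards recursion} and~\ref{prop:from semigroup to fidis:zero} then forces \(\uprev_{\st{t_1}}\pr{g_1} = \uprev_{\st{0,t_1}}\pr{g_1 \circ \pi_{\st{t_1}}^{\st{0,t_1}}} = \uprev_{\st{0}}\pr{\utranop_{t_1} g_1} = \uprev_0\pr{\utranop_{t_1} g_1}\) when \(t_1 > 0\), while~\ref{prop:from semigroup to fidis:zero} settles the case \(t_1 = 0\) directly.
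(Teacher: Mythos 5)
Your proposal is correct and follows essentially the same route as the paper: the paper's proof simply points to the recursive construction in the first part of the proof of Theorem~5.6 of Denk, Kupper and Nendel, noting that the semigroup property yields consistency, and your backward recursion together with the insertion-of-a-single-time-point argument via \(\utranop_{s-t_{k-1}}\utranop_{t_k-s}=\utranop_{t_k-t_{k-1}}\) is precisely that argument written out in full. The verification of the upper-expectation properties, the boundary conditions and the uniqueness step are all sound.
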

\begin{proof}
  The argument is the same as in the first part of the proof of Theorem~5.6 in~\citep{2018DenkKupperNendel}.
  The family~\(\pr{\uprev_{\ftset}}_{\ftset\in\ftsets}\) is constructed recursively, and the semigroup property~\ref{def:utsg:semigroup} ensures that the constructed family satisfies the condition for consistency in~\definitionref{def:fidis}.
\end{proof}

The consistent collection of finite-dimensional upper expectations in Proposition~\ref{prop:from semigroup to fidis} corresponds to a unique upper expectation~\(\uprev\) on~\(\fdomain\) due to Proposition~\ref{prop:fidis to upper expectation}.
Now the question naturally arises whether \(\uprev\) is downward continuous, because then we can invoke \theoremref{the:convex extension} to extend \(\uprev\) to~\(\meas\).
With the help of \theoremref{the:alt sufficient condition for downward continuity}, we get the following sufficient condition, which I believe is one of the main results of this contribution.
\begin{theorem}
\label{the:Markov semigroup with unif bounded rate then downward}
  Consider a downward-continuous upper expectation~\(\uprev_0\) on~\(\bfnsstsp\) and a sublinear Markov semigroup~\(\pr{\utranop_t}_{t\in\tset}\).
  Then the unique corresponding upper expectation~\(\uprev\) on~\(\fdomain\) induced by Propositions~\ref{prop:from semigroup to fidis} and \ref{prop:fidis to upper expectation} is downward continuous.
\end{theorem}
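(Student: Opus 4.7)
The plan is to verify the hypotheses of Theorem~\ref{the:alt sufficient condition for downward continuity} for the consistent collection $\pr{\uprev_\ftset}_{\ftset\in\ftsets}$ produced by Proposition~\ref{prop:from semigroup to fidis}: (i) downward continuity of each $\uprev_\ftset$; and (ii) the uniform local bound
\[
  \limsup_{s\to t} \frac{\uprev_{\st{s,t}}\pr{d^{\neq}_{\st{s,t}}}}{\abs{s-t}} \leq \lambda_n
  \quad \text{for all } t \in \cci{0}{n}.
\]
For (ii) to go through, I would read the hypothesis ``sublinear Markov semigroup'' as tacitly including the uniformly bounded rate condition~\eqref{eqn:uniformly bounded rate}, as the theorem's label suggests; without it, the infinitesimal disagreement rate cannot be controlled by downward continuity of each $\utranop_t$ alone.

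For (i), I would induct along the recursive construction of Proposition~\ref{prop:from semigroup to fidis}. At depth zero, $\uprev_{\st{0}} = \uprev_0$ is downward continuous by assumption. For the inductive step with $\ftset = \st{s_1, \dots, s_n}$ and $t > s_n$, take $\pr{f_m}_{m\in\nats} \searrow f$ in $\bfns\pr{\stsp^{\ftset\cup\st{t}}}$. The recursion gives $\uprev_{\ftset\cup\st{t}}\pr{f_m} = \uprev_\ftset\pr{g_m}$ with $g_m\pr{x} = \br{\utranop_{t-s_n} f_m\pr{x_{s_1}, \dots, x_{s_n}, \noarg}}\pr{x_{s_n}}$; since $f_m\pr{x_{s_1}, \dots, x_{s_n}, \noarg} \searrow f\pr{x_{s_1}, \dots, x_{s_n}, \noarg}$ pointwise, downward continuity of $\utranop_{t-s_n}$ yields $g_m \searrow g$ pointwise, and the inductive hypothesis then gives $\uprev_\ftset\pr{g_m} \searrow \uprev_\ftset\pr{g} = \uprev_{\ftset\cup\st{t}}\pr{f}$. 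For an arbitrary $\ftset \in \ftsets$ I would lift through consistency to $\st{0} \cup \ftset$.

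For (ii), fix $0 \leq s < t$. Because $d^{\neq}_{\st{s,t}}\pr{x_s, \noarg} = 1 - \indica{x_s}$, the recursion gives $\uprev_{\st{s,t}}\pr{d^{\neq}_{\st{s,t}}} = \uprev_{\st{s}}\pr{h}$ with $h\pr{x} = \br{\utranop_{t-s}\pr{1 - \indica{x}}}\pr{x}$. Constant-preservation and isotonicity of $\uprev_{\st{s}}$ then yield
\[
  \uprev_{\st{s,t}}\pr{d^{\neq}_{\st{s,t}}}
  \leq \sup_{x\in\stsp} \br{\utranop_{t-s}\pr{1 - \indica{x}}}\pr{x},
\]
a quantity depending only on $t-s$ by time-homogeneity. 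Dividing by $\abs{s-t}$ and invoking~\eqref{eqn:uniformly bounded rate} gives a finite $\limsup$ as $s \to t$ that is uniform in $t$, so a single constant $\lambda$ serves as $\lambda_n$ for every $n$. Theorem~\ref{the:alt sufficient condition for downward continuity} then delivers downward continuity of $\uprev$.

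The principal difficulty lies exactly at the bridge from per-$t$ downward continuity of $\utranop_t$ to a \emph{uniform} handle on the short-time disagreement rate: this is precisely what the uniformly bounded rate condition provides, and nothing else in the hypotheses seems to supply it. Absent that condition, one would be forced to abandon Theorem~\ref{the:alt sufficient condition for downward continuity} in favour of a necessary-and-sufficient criterion such as Theorem~10 of~\citep{2022Erreygers}, rephrased at the sublinear level, and rework step (ii) from scratch.
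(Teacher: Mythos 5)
Your proposal is correct and follows essentially the same route as the paper's own proof: both verify the two hypotheses of Theorem~\ref{the:alt sufficient condition for downward continuity}, establishing downward continuity of each $\uprev_{\ftset}$ by propagating decreasing sequences through the backwards recursion of Proposition~\ref{prop:from semigroup to fidis} (using downward continuity of the $\utranop_t$ and of $\uprev_0$, then consistency for sets not containing $0$), and bounding the disagreement rate by reducing $\uprev_{\st{s,t}}\pr{d^{\neq}_{\st{s,t}}}$ to $\sup_{x}\br{\utranop_{t-s}\pr{1-\indica{x}}}\pr{x}$ and invoking Eqn.~\eqref{eqn:uniformly bounded rate} to get a single $\lambda$ valid for every $n$. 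You are also right that the uniformly bounded rate condition is indispensable and is tacitly assumed: the paper's proof invokes it explicitly even though the theorem statement omits it (it reappears explicitly in the hypotheses of Theorem~\ref{the:everything combined}).
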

\begin{proof}
  It suffices to verify that the corresponding consistent collection~\(\pr{\uprev_{\ftset}}_{\ftset\in\ftsets}\) of finite dimensional distributions satisfies the conditions in Theorem~\ref{the:alt sufficient condition for downward continuity}.

  First, we prove that for all \(\ftset\in\ftsets\), \(\uprev_{\ftset}\) is downward continuous.
  For \(\ftset=\st{0}\), \(\uprev_{\ftset}=\uprev_0\) by Proposition~\ref{prop:fidis to upper expectation}~\ref{prop:from semigroup to fidis:zero}, so \(\uprev_{\ftset}\) is downward continuous because \(\uprev_0\) is downward continuous as per the assumptions in the statement.
  Next, we fix any \(\ftset=\st{t_0,t_1, \dots, t_n}\in\ftsets\) with \(n\geq 1\) and \(0=t_0<t_1<\cdots<t_n\).

  For all \(h\in\bfns\pr{\stsp^{\ftset}}\), it follows from repeated application of Proposition~\ref{prop:fidis to upper expectation}~\ref{prop:from semigroup to fidis:backwards recursion} and a single application of~Proposition~\ref{prop:fidis to upper expectation}~\ref{prop:from semigroup to fidis:zero} that
  \begin{equation}
  \label{eqn:proof of the:MSwUBRtCfA:reduction}
    \uprev_{\ftset}\pr{h}
    = \uprev_0\pr{g_0},
  \end{equation}
  where the sequence \(g_0, \dots, g_n\) is derived recursively from the initial condition \(g_n\coloneqq h\) and, for all \(k\in\st{0, \dots, n-1}\), \(g_k\in\bfns\pr{\stsp^{\st{t_0, \dots, t_k}}}\) is defined recursively for all \(x=\pr{x_0, \dots, x_k}\in\stsp^{\st{t_0, \dots, t_k}}\) by
  \begin{equation}
  \label{eqn:proof of the:MSwUBRtCfA:recursion}
    g_k\pr{x}
    \coloneqq \br[\big]{\utranop_{\pr{t_{k+1}-t_k}} g_{k+1}\pr{x_0, \dots, x_k, \noarg}}\pr{x_k}.
  \end{equation}

  Fix any \(\bfns\pr{\stsp^{\ftset}}^{\nats}\ni\pr{f_\ell}_{\ell\in\nats}\searrow f\in\bfns\pr{\stsp^{\ftset}}\).
  Then by Eqn.~\eqref{eqn:proof of the:MSwUBRtCfA:reduction} for \(h=f\), \(\uprev_{\ftset}\pr{f}=\uprev_0\pr{g_0}\), where \(\pr{g_{0}, \dots, g_n}\) is the sequence as defined in Eqn.~\eqref{eqn:proof of the:MSwUBRtCfA:recursion}; similarly, for all \(\ell\in\nats\), \(\uprev_{\ftset}\pr{f_\ell}=\uprev_0\pr{g_{\ell, 0}}\) where \(\pr{g_{\ell, 0}, \dots, g_{\ell, n}}\) is recursively defined as in Eqn.~\eqref{eqn:proof of the:MSwUBRtCfA:recursion} with initial condition~\(g_{\ell, n}=f_\ell\).
  Then for \(k=n-1\), and then for \(k=n-2\) to~\(k=0\), it follows from the downward continuity of~\(\utranop_{t_{k+1}-t_k}\) that \(\pr{g_{\ell, k}}_{\ell\in\nats}\searrow g_k\).
  From this and the downward continuity of \(\uprev_0\), we infer that
  \begin{equation*}
    \lim_{\ell\to+\infty} \uprev_{\ftset}\pr{f_\ell}
    = \lim_{\ell\to+\infty} \uprev_0\pr{g_{\ell, 0}}
    = \uprev_0\pr{g_0}
    = \uprev_{\ftset}\pr{f},
  \end{equation*}
  as required.
  Finally, for all \(\ftset=\st{t_1, \dots, t_n}\in\ftsets\) with \(0<t_1<\cdots<t_n\), the downward continuity of~\(\uprev_{\st{0}\cup\ftset}\)---which we have just proved---implies that of \(\uprev_{\ftset}\) because \(\pr{\uprev_{\ftsetalt}}_{\ftsetalt\in\ftsets}\) is consistent.

  Second, we set out to prove that for all \(n\in\nats\), there is some \(\lambda_n\in\nnegreals\) such that
  \begin{equation}
  \label{eqn:proof of the ...:to prove second part}
    \limsup_{s\to t}\frac{\uprev_{\st{s,t}}\pr[\big]{d_{\st{s,t}}^{\neq}}}{\abs{s-t}}
    \leq \lambda_n
    \quad\text{for all } t\in\cci{0}{n}.
  \end{equation}
  Since \(\pr{\utranop_t}_{t\in\tset}\) has uniformly bounded rate,
  \begin{equation}
  \label{eqn:proof of the ...:lambda}
    \lambda
    \coloneqq
    \limsup_{\Delta\searrow 0} \sup\st*{\frac{\br[\big]{\utranop_{\Delta}\pr{1-\indica{x}}}\pr{x}}{\Delta}\colon x\in\stsp}
    < +\infty.
  \end{equation}
  For all \(t_1,t_2\in\tset\) such that \(t_1<t_2\) and \(\pr{x_{t_1}, x_{t_2}}\in\stsp^{\st{t_1,t_2}}\),
  \begin{equation*}
    d^{\neq}_{\st{t_1,t_2}}\pr{x_{t_1}, x_{t_2}}=1-\indica{x_{t_1}}\pr{x_{t_2}},
  \end{equation*}
  so it follows from Proposition~\ref{prop:fidis to upper expectation}, and with \(\Delta\coloneqq t_2-t_1\), that
  \begin{equation*}
    \uprev_{\st{t_1,t_2}}\pr[\big]{d^{\neq}_{\st{t_1,t_2}}}
    = \uprev_0\pr[\big]{\utranop_{t_1} \pr[\big]{\stsp\to\reals\colon x\mapsto \br[\big]{\utranop_{\Delta}\pr{1-\indica{x}}}\pr{x}}}.
  \end{equation*}
  Since \(\utranop_{t_1}\) and \(\uprev_0\) are bounded above by the supremum and positively homogeneous, it follows more or less immediately from the preceding equality and Eqn.~\eqref{eqn:proof of the ...:lambda} that
  \begin{equation*}
    \limsup_{s\to t} \frac{\uprev_{\st{s,t}}\pr[\big]{d^{\neq}_{\st{s,t}}}}{\abs{s-t}}
    \leq \lambda
    \quad\text{for all } t\in\tset.
  \end{equation*}
  So for all \(n\in\nats\), Eqn.~\eqref{eqn:proof of the ...:to prove second part} holds with \(\lambda_n=\lambda\).
\end{proof}

By combining \theoremref{the:Markov semigroup with unif bounded rate then downward} with \theoremref{the:convex extension} and \corollaryref{cor:extension is unique}, we get what I believe to be the second main result of this contribution.
In order to highlight the similarity to \citep[Theorem 5.6]{2018DenkKupperNendel}, \citep[Theorem 2.5]{2020Nendel} and \citep[Definition 5.5]{2021Nendel}, for all \(t\in\tset\) we let \(X_t\colon\cadpths\to\stsp\) be the projector that maps~\(\pth\in\cadpths\) to~\(\pth\pr{t}\).
\begin{theorem}
\label{the:everything combined}
  Consider an upper expectation~\(\uprev_0\) that is downward continuous and a sublinear Markov semigroup~\(\pr{\utranop_t}_{t\in\tset}\) with uniformly bounded rate.
  Then there is a sublinear expectation~\(\uprev\) on~\(\meas\) such that
  \begin{enumerate}[label=\upshape(\roman*)]
    \item for all \(f\in\bfns\pr{\stsp}\), \(\uprev\pr{f\pr{X_0}}=\uprev_0\pr{f}\);
    \item for all \(\ftset=\st{s_1, \dots, s_n}\in\tset\) and \(t\in\tset\) such that \(s_1<\cdots<s_n<t\) and all \(f\in\bfns\pr{\stsp^{n+1}}\),
    \begin{equation*}
      \uprev\pr{f\pr{X_{t_1}, \dots, X_{t_n}, X_t}}
      = \uprev\pr{g\pr{X_{t_1}, \dots, X_{t_n}}}
    \end{equation*}
    where \(g\in\bfns\pr{\stsp^{n}}\) maps \(x=\pr{x_1, \dots, x_n}\in\stsp^n\) to
    \begin{equation*}
      g\pr{x}
      \coloneqq \br[\big]{\utranop_{t-t_n} f\pr{x_{s_1}, \dots, x_{s_n}, \noarg}}\pr{x_{s_n}};
    \end{equation*}
    \item \(\uprev\) is downward continuous on~\(\fdomain_{\delta, \mathrm{b}}\); and
    \item \(\uprev\) is upward continuous on~\(\bbmeas\);
  \end{enumerate}
  Moreover, the restriction of~\(\uprev\) to~\(\bbmeas\) is the unique sublinear expectation that has these four properties.
\end{theorem}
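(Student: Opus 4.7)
The plan is to assemble the preceding results in order. First, I would apply Proposition~\ref{prop:from semigroup to fidis} to the pair $\uprev_0, \pr{\utranop_t}_{t\in\tset}$ to obtain the unique consistent collection $\pr{\uprev_{\ftset}}_{\ftset\in\ftsets}$ of finite-dimensional upper expectations determined by the semigroup recursion. Proposition~\ref{prop:fidis to upper expectation} then converts this collection into a unique upper expectation on the linear lattice $\fdomain\subseteq\bfns\pr{\cadpths}$; call it $\uprev^{\fdomain}$.

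Next, because $\uprev_0$ is downward continuous and $\pr{\utranop_t}_{t\in\tset}$ is a sublinear Markov semigroup with uniformly bounded rate, Theorem~\ref{the:Markov semigroup with unif bounded rate then downward} guarantees that $\uprev^{\fdomain}$ is downward continuous on $\fdomain$. I can therefore apply Theorem~\ref{the:convex extension} with $\gendomain=\fdomain$ and $\genset=\cadpths$: it produces a sublinear expectation $\uprev$ on $\genmeas\pr{\fdomain}=\meas$ that extends $\uprev^{\fdomain}$, is downward continuous on $\fdomain_{\delta,\mathrm{b}}$ and is upward continuous on $\bbmeas$, which gives properties (iii) and (iv) at once. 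Properties (i) and (ii) follow from the identification $\uprev\pr{f\circ\pi_{\ftset}}=\uprev_{\ftset}\pr{f}$ together with parts \ref{prop:from semigroup to fidis:zero} and \ref{prop:from semigroup to fidis:backwards recursion} of Proposition~\ref{prop:from semigroup to fidis}, once we notice that $f\circ\pi_{\st{t_1,\dots,t_n}}$ coincides pointwise with $f\pr{X_{t_1}, \dots, X_{t_n}}$ on $\cadpths$.

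For the uniqueness assertion, suppose $\uprev'$ is any sublinear expectation on $\bbmeas$ that satisfies (i)--(iv). Setting $\uprev'_{\ftset}\pr{f}\coloneqq\uprev'\pr{f\circ\pi_{\ftset}}$ gives a collection that is automatically consistent, because $f\circ\pi_{\ftset}$ is one and the same function on $\cadpths$ regardless of the representation used for it. A short induction on $\card{\ftset}$---inserting the time point $0$ by consistency whenever $0\notin\ftset$, peeling off the latest time point via (ii), and closing the induction via (i)---shows that $\uprev'$ and $\uprev$ agree on every element of $\fdomain$. Since both $\uprev'$ and $\uprev\vert_{\bbmeas}$ are convex expectations on $\bbmeas$ that are downward continuous on $\fdomain_{\delta,\mathrm{b}}$ and upward continuous, Corollary~\ref{cor:extension is unique} expresses each of them as the same supremum over decreasing sequences in $\fdomain$; consequently $\uprev'=\uprev\vert_{\bbmeas}$.

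I expect the main (and really only) subtlety to be the routine but slightly finicky verification that properties (i) and (ii) alone suffice to pin $\uprev'$ down on all of $\fdomain$: this hinges on using the automatic consistency to bring $0$ into the finite set of time points whenever it is missing, so that the backward recursion of (ii) can be iterated all the way down to a function of $X_0$ and then closed off by (i). Everything else is a direct, essentially mechanical, assembly of Propositions~\ref{prop:from semigroup to fidis} and~\ref{prop:fidis to upper expectation}, Theorems~\ref{the:convex extension} and~\ref{the:Markov semigroup with unif bounded rate then downward}, and Corollary~\ref{cor:extension is unique}.
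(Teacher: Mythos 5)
Your proposal is correct and follows exactly the paper's own route: the paper's proof is a one-line citation of Theorem~\ref{the:Markov semigroup with unif bounded rate then downward}, Propositions~\ref{prop:from semigroup to fidis} and~\ref{prop:fidis to upper expectation}, Theorem~\ref{the:convex extension} and Corollary~\ref{cor:extension is unique}, used in precisely the roles you assign them. Your write-up merely makes explicit the assembly steps (including the uniqueness argument via Corollary~\ref{cor:extension is unique}) that the paper leaves implicit.
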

\begin{proof}
  Follows from \theoremref{the:Markov semigroup with unif bounded rate then downward}, Propositions~\ref{prop:from semigroup to fidis} and \ref{prop:fidis to upper expectation}, \theoremref{the:convex extension} and \corollaryref{cor:extension is unique}.
\end{proof}

In comparison to Theorem~5.6 in \cite{2018DenkKupperNendel}, \theoremref{the:everything combined} has a more limited scope, since the former involves convex expectations, a Polish space as state space and a `two-parameter semigroup'.
\theoremref{the:everything combined} is more useful though, since the domain in this result is much more rich than the one in \citep[Theorem~5.6]{2018DenkKupperNendel}---which only includes bounded functions on the set of all paths that are measurable with respect to the (for many purposes inadequate) product \(\sigma\)-algebra.
A similar comparison can be made between \theoremref{the:everything combined} on the one hand and Theorem~2.5 in \citep{2020Nendel} and Remark~5.4 \citep{2021Nendel}---which rely on Theorem~5.6 in \citep{2018DenkKupperNendel}--- on the other hand, the difference with before being that all of these results assume a countable state space and start from a one-parameter semigroup.

\subsection{Examples of Sublinear Markov Semigroups}
\label{ssec:examples}
\theoremref{the:everything combined} is only useful if we can actually construct and/or determine a sublinear Markov semigroup.
To conclude this section, we list some existing results which do exactly that.

\paragraph*{The Linear Case}
Consider a (linear) \emph{Markov semigroup}~\(\pr{\tranop_t}_{t\in\tset}\), that is, a sublinear Markov semigroup that consists of linear (transition) operators.
Then it follows from our assumptions---and the Daniell--Stone Theorem---that the matrix representation of~\(\pr{\tranop_t}_{t\in\tset}\) is in one-to-one correspondence with what is known as a transition (matrix) function---sometimes also `transition matrix', see \citep[§~1.1]{1991Anderson-Continuous}, \citep[Example~III.3.6]{1994Rogers-Diffusions}, \citep[Section~23.10]{1957Hille-Functional} and \citep[Part~II, §1]{1960Chung-Markov}.
The semigroup~\(\pr{\tranop_t}_{t\in\tset}\) having uniformly bounded rate not only suffices for the condition in Eqn.~\eqref{eqn:uniformly continuous} but is also necessary---see for example \citep[Section~23.11]{1957Hille-Functional} or \citep[Section~II.19, Theorem~2]{1960Chung-Markov}---and under this condition, the Markov semigroup~\(\pr{\tranop_t}_{t\in\tset}\) is generated by a unique bounded linear operator~\(\rateop\), in the sense that for all \(t\in\posreals\),
\begin{equation}
  \tranop_t
  = e^{t\rateop}
  = \lim_{n\to+\infty} \pr*{\eye+\frac{t}{n}\rateop}^n
  = \sum_{n=0}^{+\infty} \frac{t^n\rateop^n}{n!};
\end{equation}
this unique operator~\(\rateop\) is given by
\begin{equation*}
  \rateop
  = \lim_{t\searrow0} \frac{\tranop_t-\eye}{t}
\end{equation*}
and is a rate operator that is downward continuous.

Conversely, any bounded rate operator~\(\rateop\in\bops\) generates a semigroup~\(\pr{e^{t\rateop}}_{t\in\tset}\) of transition operators that has uniformly bounded rate; even more, this semigroup is not only strongly continuous but also uniformly continuous.
Finally, it can be verified that \(\pr{e^{t\rateop}}_{t\in\tset}\) is a Markov semigroup---or equivalently, that for all \(t\in\posreals\), \(e^{t\rateop}\) is downward continuous---if (and then only if) \(\rateop\) is downward continuous; this is trivially the case whenever \(\stsp\) is finite.
For more details regarding linear (uniformly continuous) semigroups and exponentials of bounded linear operators, we refer the reader to \citep{1957Hille-Functional,2000EngelNagel-Semigroups} and references therein.

\paragraph{The Finite-State Case}
The aforementioned results in the linear case translate to the sublinear case, at least when the state space~\(\stsp\) is finite.
In my doctoral dissertation \citep[Theorem~3.75 and Lemma~3.76]{2021Erreygers-Phd}, I show that an upper transition semigroup~\(\pr{\utranop_t}_{t\in\tset}\) has uniformly bounded rate if and only if it satisfies Eqn.~\eqref{eqn:uniformly continuous}, and in that case
\begin{equation*}
  \urateop
  \coloneqq \lim_{t\searrow} \frac{\utranop_t-\eye}{t}
\end{equation*}
is the unique (necessarily bounded) upper rate operator such that
\begin{equation*}
  \utranop_t
  = e^{t\urateop}
  \coloneqq \lim_{n\to+\infty} \pr*{\eye+\frac{t}{n}\urateop}^n
  \quad\text{for all } t\in\tset.
\end{equation*}
Conversely, for any upper rate operator~\(\urateop\), the corresponding family of exponentials~\(\pr{e^{t\urateop}}_{t\in\tset}\) is an upper transition semigroup that has uniformly bounded rate and is uniformly (and therefore strongly) continuous \citep{2017KrakDeBockSiebes,2017DeBock,2015Skulj}.
\Citet[Proposition~4.9]{2020Nendel} show that for any bounded set~\(\mathscr{Q}\subseteq\mathfrak{Q}\) with upper envelope~\(\urateop\), \(\pr{e^{t\urateop}}_{t\in\tset}\) is the pointwise smallest semigroup such that \(e^{t\rateop}f\leq e^{t\urateop} f\) for all \(\rateop\in\mathscr{Q}\) and \(t\in\posreals\).

\paragraph{The Infinite-State Case}
The remaining case that \(\stsp\) is countably infinite has received less attention.
\Citet[Section~5]{2021Nendel} constructs a strongly continuous upper transition semigroup~\(\pr{\nisiogr_t}_{t\in\tset}\) as follows.
The starting point is a set~\(\st{\pr{\tranop^i_t}_{t\in\tset}\colon i\in\mathcal{I}}\) of Markov semigroups.
He then constructs the induced \emph{Nisio semigroup}~\(\pr{\nisiogr_t}_{t\in\tset}\), and shows that it is the pointwise smallest semigroup that dominates each of the Markov semigroups in this set.
In the special case that \(\pr{\tranop^i_t}_{t\in\tset}\) has uniformly bounded rate for all \(i\in\mathcal{I}\), we collect their generators in
\begin{equation*}
  \mathscr{R}
  \coloneqq \st*{\lim_{t\searrow0}\frac{\tranop^i_t-\eye}{t}\colon i\in\mathcal{I}};
\end{equation*}
under the assumption that \(\sup\st{\norm{\rateop}\colon\rateop\in\mathscr{R}}<+\infty\),
\citet[Remark~5.6]{2021Nendel} shows that the Nisio semigroup~\(\pr{\nisiogr_t}_{t\in\tset}\) is strongly continuous, and that
\begin{equation*}
  \lim_{t\searrow0} \frac{\nisiogr_t f-f}{t}
  = \nisiogen f
  \quad\text{for all } f\in\bfnsstsp,
\end{equation*}
where \(\nisiogen\) is the upper envelope of the (bounded) set~\(\mathscr{R}\) of generators of the Markov semigroups.

Unfortunately, it is not immediately clear whether there are conditions on the set of Markov semigroups under which the induced Nisio semigroup~\(\pr{\nisiogr_t}_{t\in\tset}\) is a sublinear Markov semigroup that has uniformly bounded rate.
In order to adhere to the page limit, I've chosen not to pursue this avenue in its full generality here; we will, however, treat one example where this is the case.

Henceforth, we let \(\stsp\coloneqq\nnegints\), and we fix some rate interval~\(\poissint\coloneqq\cci{\llambda}{\ulambda}\subset\nnegreals\).
We define the \emph{sublinear Poisson generator}~\(\poissgen\colon\bfnsstsp\to\bfnsstsp\) for all \(f\in\bfnsstsp\) by
\begin{equation*}
  \poissgen f
  \colon\nnegints\to\reals
  \colon z\mapsto
  \max\st{\lambda\pr{f\pr{z+1}-f\pr{z}}\colon \lambda\in\poissint}.
\end{equation*}
It is easy to verify that \(\poissgen\) is a bounded upper rate operator.
The sublinear Poisson generator~\(\poissgen\) is the pointwise upper envelope of a bounded set of bounded rate operators, for example the sets~\(\st{\linpoissgen{\lambda}\colon \lambda\in\poissint}\) and \(\st{\linpoissgen{\llambda}, \linpoissgen{\ulambda}}\) of Poisson generators, where for all \(\lambda\in\posreals\), the corresponding Poisson generator~\(\linpoissgen{\lambda}\in\bops\) is defined for all \(f\in\bfnsstsp\) by
\begin{equation*}
  \linpoissgen{\lambda} f
  \colon\nnegints\to\reals
  \colon z\mapsto
  \lambda \pr{f\pr{z+1}-f\pr{z}}.
\end{equation*}

\Citet[Theorem~10]{2019ErreygersDeBock} show that for all \(t\in\posreals\),
\begin{equation*}
  \poissgr_t
  \coloneqq \lim_{n\to+\infty} \pr*{\eye+\frac{t}{n}\poissgen}^n
\end{equation*}
exists.
Furthermore, it is shown in~\citep[Theorem~45, Proposition~48 and Lemma~51]{2019ErreygersDeBock-arXiv} that \(\pr{\poissgr_t}_{t\in\tset}\) is a uniformly continuous semigroup of upper transition operators with
\begin{equation}
\label{eqn:poissgr:poissgen is derivative}
  \lim_{t\searrow0} \frac{\poissgr_t-\eye}{t}
  = \poissgen.
\end{equation}
First and foremost, Eqn.~\eqref{eqn:poissgr:poissgen is derivative} implies that \(\pr{\poissgr_t}_{t\in\tset}\) is the Nisio semigroup induced by the sets~\(\st{\pr{M_{\lambda, t}}_{t\in\tset}\colon \lambda\in\Lambda}\) and \(\st{\pr{M_{\lambda, t}}_{t\in\tset}\colon \lambda\in\st{\llambda, \ulambda}}\), where for all \(\lambda\in\nnegreals\), \(\pr{M_{\lambda,t}}_{t\in\tset}\) is the Markov semigroup induced by the Poisson generator~\(\linpoissgen{\lambda}\).
Second, Eqn.~\eqref{eqn:poissgr:poissgen is derivative} implies the condition in Eqn.~\eqref{eqn:uniformly bounded rate}.
Third, it turns out that \(\poissgr_t\) is downward continuous for all \(t\in\posreals\), which makes the Nisio semigroup~\(\pr{\poissgr_t}_{t\in\tset}\) a sublinear Markov semigroup with uniformly bounded rate; the proof of this result exclusively uses results from \citep{2019ErreygersDeBock,2019ErreygersDeBock-arXiv}, and can be found in
\ifarxiv
  \appendixref{asec:proofs}.
\else
  the Supplementary Material.
\fi
\begin{proposition}
\label{prop:poisson group}
  The family~\(\pr{\poissgr_t}_{t\in\tset}\) is a sublinear Markov semigroup that has uniformly bounded rate.
\end{proposition}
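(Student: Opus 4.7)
The semigroup, upper-transition-operator and uniformly-bounded-rate properties of~$\pr{\poissgr_t}_{t\in\tset}$ have essentially been established in the discussion preceding the statement: the first two by the results of~\citep{2019ErreygersDeBock-arXiv} cited there, and the third by the noted implication from Eqn.~\eqref{eqn:poissgr:poissgen is derivative} combined with boundedness of~$\poissgen$ and uniform continuity of~$\pr{\poissgr_t}_{t\in\tset}$. The substantive task left is therefore to show that each $\poissgr_t$ is downward continuous.

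The plan is to leverage the exponential approximation $\poissgr_t = \lim_n\pr{\eye + \pr{t/n}\poissgen}^n$ from~\citep{2019ErreygersDeBock}. First, the generator~$\poissgen$ is itself pointwise downward continuous: the value $\br{\poissgen f}\pr{z}$ depends only on the two real numbers $f\pr{z}$ and $f\pr{z+1}$ through the jointly continuous map $\pr{a,b}\mapsto\ulambda\pr{b-a}^{+}-\llambda\pr{b-a}^{-}$, so pointwise convergence of any bounded sequence~$\pr{f_k}_{k\in\nats}$ in~$\bfnsstsp$ automatically yields pointwise convergence of~$\pr{\poissgen f_k}_{k\in\nats}$. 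Since downward continuity and isotonicity are preserved under non-negative scalar multiplication, sums, and compositions of isotone operators---for any isotone downward-continuous $B$ and $C$, a decreasing sequence $\pr{f_k}_{k\in\nats}\searrow f$ yields a decreasing sequence $\pr{Cf_k}_{k\in\nats}\searrow Cf$, to which downward continuity of~$B$ applies---every approximant $A_{n,t}\coloneqq\pr{\eye+\pr{t/n}\poissgen}^n$ is an isotone downward-continuous upper transition operator.

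To transfer downward continuity from $A_{n,t}$ to $\poissgr_t$, I would invoke operator-seminorm convergence $\opnorm{\poissgr_t - A_{n,t}}\to 0$ as $n\to+\infty$, available from~\citep{2019ErreygersDeBock,2019ErreygersDeBock-arXiv}, and apply a standard three-term estimate. For any $\bfnsstsp^{\nats}\ni\pr{f_k}_{k\in\nats}\searrow f\in\bfnsstsp$, the uniform bound $M\coloneqq\norm{f_1}\vee\norm{f}$ dominates $\norm{f_k}$ and $\norm{f}$ for all~$k$; positive homogeneity of~$\poissgr_t$ and~$A_{n,t}$ together with $\poissgr_t 0 = A_{n,t} 0 = 0$ then gives
\begin{equation*}
  \abs{\br{\poissgr_t f_k}\pr{z}-\br{\poissgr_t f}\pr{z}} \leq 2M\opnorm{\poissgr_t-A_{n,t}} + \abs{\br{A_{n,t}f_k}\pr{z}-\br{A_{n,t}f}\pr{z}}
\end{equation*}
for every $z\in\stsp$, $k\in\nats$ and $n\in\nats$. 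Choosing $n$ large first to make the operator-seminorm contribution arbitrarily small, and then letting $k\to+\infty$ so that the middle term vanishes by the downward continuity of~$A_{n,t}$, yields the desired conclusion.

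The main obstacle I foresee is confirming that the approximants converge in operator seminorm rather than merely pointwise in~$f$; if only the weaker mode is on record in the cited works, it should be derivable from uniform continuity of~$\pr{\poissgr_t}_{t\in\tset}$ together with the semigroup identity $\poissgr_t=\poissgr_{t/n}^n$ and an operator-seminorm expansion $\opnorm{\poissgr_h-\pr{\eye+h\poissgen}}=o\pr{h}$, analogous to the classical argument for uniformly continuous linear semigroups with bounded generator.
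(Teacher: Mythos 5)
Your proposal is correct in outline but takes a genuinely different route from the paper. For the downward continuity of each \(\poissgr_t\)---the only substantive part, as you rightly note---the paper does not go through the exponential approximants at all. It fixes \(\pr{f_n}_{n\in\nats}\searrow f\), uses subadditivity to reduce the problem to showing \(\lim_{n\to+\infty}\br{\poissgr_t\pr{f_n-f}}\pr{z}\leq 0\), dominates \(f_n-f\) by its increasing envelope \(\tilde f_n\pr{x}=\max\st{f_n\pr{y}-f\pr{y}\colon y\leq x}\), and then invokes a comparison result specific to the Poisson semigroup: on increasing arguments, \(\poissgr_t\) is dominated by integration against the Poisson distribution with parameter \(\ulambda t\). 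The Monotone Convergence Theorem for that single fixed measure then finishes the job. That buys independence from any statement about the mode of convergence of \(\pr{\eye+\pr{t/n}\poissgen}^n\); your route instead hinges on the operator-seminorm convergence of the approximants, which you correctly identify as the crux, and your fallback derivation (telescoping with the nonexpansiveness from Eqn.~\eqref{eqn:utranop:contraction} plus \(\opnorm{\poissgr_h-\eye-h\poissgen}=o\pr{h}\), which the paper itself uses in \(\zopnorm{\noarg}\) form for the bounded-rate part) is sound. In exchange, your argument is more generic: it would apply verbatim to any uniformly continuous upper transition semigroup with a bounded, pointwise-continuous generator, not just the Poisson one, which connects to the paper's concluding remarks.

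One genuine imprecision to repair: your justification for the isotonicity of the approximants does not work as stated, because \(\poissgen\) is not isotone---take \(f=0\) and \(g=\indica{z}\), so that \(\br{\poissgen g}\pr{z}=-\llambda<0=\br{\poissgen f}\pr{z}\) whenever \(\llambda>0\)---hence ``sums of isotone operators'' does not apply to \(\eye+\pr{t/n}\poissgen\). Isotonicity of \(\eye+\delta\poissgen\) holds only for \(\delta\ulambda\leq 1\), via the representation \(\br{\pr{\eye+\delta\poissgen}f}\pr{z}=\max\st{\pr{1-\delta\lambda}f\pr{z}+\delta\lambda f\pr{z+1}\colon\lambda\in\poissint}\) as a maximum of convex combinations; this restricts you to \(n\geq t\ulambda\), which is harmless for the limit but must be said. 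With that repaired, your three-term estimate closes the argument.
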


\section{Conclusion}
We have investigated sublinear expectations for countable-state uncertain processes, more specifically (i) how an upper (or sublinear) expectation on the (bounded) finitary functions is in one-to-one correspondence with a consistent collection of finite-dimensional upper expectations; (ii) how, under the condition of downward continuity, we can extend such an upper expectation to a sublinear expectation on the bounded above or below measurable functions that is upward and downward continuous on (a large part of) its domain; and (iii) how we can get an upper expectation on the bounded finitary functions that satisfies the downward-continuity condition by combining a downward-continuous upper expectation with a sublinear Markov semigroup.

Inevitably, we have left some questions unanswered.
The first one that springs to mind is whether in \corollaryref{cor:extension is unique} we can achieve uniqueness on the entire domain~\(\genmeas\pr{\gendomain}\) instead of only on the bounded below part~\(\genbbmeas\pr{\gendomain}\).
A second interesting question is whether the condition in \theoremref{the:alt sufficient condition for downward continuity} is not only sufficient but also necessary; the results in \citep{2022Erreygers} suggest that this is not the case, and may provide guidance to find a necessary condition in terms of the upper expectation.
These results could also provide the key to relax the requirement of sublinearity in \theoremref{the:alt sufficient condition for downward continuity} to convexity.
Finally, in \sectionref{ssec:examples} we did not investigate whether every (downward-continuous) bounded upper rate operator generates a semigroup of (downward-continuous) upper transition operators.
I have confirmed--but not yet published---that this is indeed the case for bounded upper rate operators; dealing with unbounded upper rate operators---which can only occur if \(\stsp\) is infinite---appears to be more involved though, and is the subject of ongoing research.

Even more general would be to shift to sublinear Feller processes, so Markov processes with uncountable state spaces.
\Citet{2021NendelRockner} have already taken the first steps in the investigation of sublinear Feller semigroups, but a lot more work is needed to establish a robust version---so with sublinear expectations instead of linear ones---of the Dynkin--Kinney--Blumenthal Theorem \citep[Theorem~II.7.17]{1994Rogers-Diffusions}.

\appendix




\acks{%
This work is supported by the Research Foundation---Flanders (FWO) (project number 3G028919).
I would like to thank Jasper De Bock for his everpresent enthusiasm to discuss (the) mathematics (in this work), as well as the participants of the ImPRooF workshop---where I presented a preliminary verison of this work---for the stimulating discussions.
I am also grateful to the four anonymous reviewers of this work; their constructive comments and super suggestions have surely made a difference.
}



\renewcommand{\deDe}{de}
\bibliography{erreygers23}

\renewcommand{\deDe}{De}
\ifappend
\ifarxiv
\section{Relegated proofs}
\label{asec:proofs}
\else\clearpage
\fi
\begin{proof}\textbf{of~\theoremref{the:convex extension}.}
  Let \(\domprevs_{\cprev}\coloneqq\st{\prev\in\prevs_{\gendomain}\colon \cprev^*\pr{\prev}<+\infty}\).
  Since \(\cprev\) is downward continuous, we know from \lemmaref{lem:downward for convex} that every linear expectation~\(\prev\in\domprevs_{\cprev}\) is downward continuous.
  Consequently, if follows from the Daniell--Stone Theorem that for all \(\prev\in\domprevs_{\cprev}\), \(\prev=\hat\prev\vert_{\gendomain}\) with
  \begin{equation*}
    \hat{\prev}
    \colon \genmeas\to\extreals
    \colon g\mapsto \int g\mathrm{d}\prob_{\prev}.
  \end{equation*}
  It follows immediately from this and \lemmaref{lem:representation} that \(\cprevext\) is well defined and extends \(\cprev\).

  On several occasions, we will need that for all \(f\in\genmeas\pr{\gendomain}\) and \(\prev\in\domprevs_{\cprev}\), \(\cprev^*\pr{\prev}\in\reals\) (due to \lemmaref{lem:representation}) and
  \begin{equation}
  \label{eqn:proof of convex extension:upper bound on hatprev}
    \hat{\prev}\pr{f}
    \leq \cprevext\pr{f} + \cprev^*\pr{\prev}.
  \end{equation}

  Next, we show that \(\cprevext\) is a convex expectation.
  The extension~\(\cprevext\) is a nonlinear expectation: (i) \(\genmeas\pr{\gendomain}\) includes all constant real functions because \(\gendomain\subseteq\genmeas\pr{\gendomain}\) and \(\gendomain\) includes all constant real functions; (ii) \(\cprevext\) is isotone because the Lebesgue integral is isotone on~\(\genmeas\pr{\gendomain}\) \citep[Chapter~8, Theorem~5~(iv)]{1997Fristedt-Modern}; and (iii) \(\cprevext\) is constant preserving because it extends~\(\cprev\) and \(\cprev\) is constant preserving.
  To verify that \(\cprevext\) is convex, we fix some \(f,g\in\genmeas\pr{\gendomain}\) and \(\lambda\in\cci{0}{1}\) such that \(f+g\) is meaningful and in~\(\genmeas\pr{\gendomain}\) and \(\lambda\cprevext\pr{f}+\pr{1-\lambda}\cprevext\pr{g}\) is meaningful.
  If \(\lambda=0\) or \(\lambda=1\), clearly \(\cprevext\pr{\lambda f+\pr{1-\lambda} f}=\lambda\cprevext\pr{f}+\pr{1-\lambda}\cprevext\pr{g}\); hence, without loss of generality we may assume that \(0<\lambda<1\).
  Due to symmetry, and because \(\lambda\cprevext\pr{f}+\pr{1-\lambda}\cprevext\pr{g}\) is meaningful, we need to distinguish three cases: (i) \(\cprevext\pr{f}=+\infty\) and \(\cprevext\pr{g}>-\infty\); (ii) \(\cprevext\pr{f}\) and \(\cprevext\pr{g}\) both real; and (iii) \(\cprevext\pr{f}=-\infty\) and \(\cprevext\pr{g}<+\infty\).
  In the first case, the required inequality holds trivially.
  In the second case, it follows from Eqn.~\eqref{eqn:proof of convex extension:upper bound on hatprev} that for all \(\prev\in\domprevs_{\cprev}\), \(\hat\prev\pr{f}<+\infty\) and \(\hat\prev\pr{g}<+\infty\), so \(\lambda\hat\prev\pr{f}+\pr{1-\lambda}\hat\prev\pr{g}\) is meaningful and, due to the linearity of~\(\hat\prev\) \citep[Chapter~8, Theorem~5~(i)]{1997Fristedt-Modern}, equal to~\(\hat\prev\pr{\lambda f+\pr{1-\lambda}g}\).
  Similarly, in the third case, it follows from Eqn.~\eqref{eqn:proof of convex extension:upper bound on hatprev} that for all \(\prev\in\domprevs_{\cprev}\), \(\hat\prev\pr{f}=-\infty\) and \(\hat\prev\pr{g}<+\infty\), so \(\lambda\hat\prev\pr{f}+\pr{1-\lambda}\hat\prev\pr{g}\) is meaningful and, due to the linearity of~\(\hat\prev\), equal to~\(\hat\prev\pr{\lambda f+\pr{1-\lambda}g}\).
  Consequently, in the last two cases,
  \begin{align*}
    \MoveEqLeft\cprevext\pr{\lambda f+\pr{1-\lambda}g}\\
    &= \sup\st[\big]{\hat\prev\pr{\lambda f+\pr{1-\lambda}g}-\cprev^*\pr{\prev}\colon\prev\in\domprevs_{\cprev}} \\
    &= \sup\st[\big]{\hat\prev\pr{\lambda f}+\pr{1-\lambda}\hat\prev\pr{g}-\cprev^*\pr{\prev}\colon\prev\in\domprevs_{\cprev}} \\
    &\leq \lambda \sup\st[\big]{\hat\prev\pr{f}-\cprev^*\pr{\prev}\colon\prev\in\domprevs_{\cprev}}\\&\qquad+\pr{1-\lambda}\sup\st[\big]{\hat\prev\pr{g}-\cprev^*\pr{\prev}\colon\prev\in\domprevs_{\cprev}} \\
    &= \lambda\cprevext\pr{f}+\pr{1-\lambda}\cprevext\pr{g},
  \end{align*}
  as required.

  \Citet[Theorem~3.10]{2018DenkKupperNendel} show that the restriction of~\(\cprevext\) to~\(\genmeas\pr{\gendomain}\cap\bfns\pr{\genset}\supseteq\gendomain_{\delta, \mathrm{b}}\) is downward continuous on~\(\gendomain_{\delta, \mathrm{b}}\), so clearly \(\cprevext\) is downward continuous on~\(\gendomain_{\delta, \mathrm{b}}\) too.

  Proving the upward continuity on~\(\genbbmeas\pr{\gendomain}\) is straightforward.
  Fix any~\(\pr{\genbbmeas}^{\nats}\ni\pr{f_n}_{n\in\nats}\nearrow f\in\genbbmeas\pr{\gendomain}\).
  For all \(\prev\in\domprevs_{\cprev}\), \(\hat\prev\) is upward continuous on~\(\genbbmeas\)---due to the Monotone Convergence Theorem, see for example \citep[Theorem~12.1]{2017Schilling-Measures}---and therefore \(\lim_{n\to+\infty}\hat\prev\pr{f_n}=\sup_{n\in\nats}\hat\prev\pr{f_n}=\hat\prev\pr{f}\).
  From this and the isotonicity of~\(\cprevext\), it follows that
  \begin{align*}
    \lim_{n\to+\infty} \cprevext\pr{f_n}
    &= \sup\st[\big]{\cprevext\pr{f_n}\colon n\in\nats} \\
    &= \sup\st[\big]{\sup\st[\big]{\hat\prev\pr{f_n}-\cprev^*\pr{\prev}\colon\prev\in\domprevs_{\cprev}}\colon n\in\nats} \\
    &= \sup\st[\big]{\sup\st[\big]{\hat\prev\pr{f_n}-\cprev^*\pr{\prev}\colon n\in\nats}\colon\prev\in\domprevs_{\cprev}} \\
    &= \sup\st[\big]{\hat\prev\pr{f}-\cprev^*\pr{\prev}\colon\prev\in\domprevs_{\cprev}} \\
    &= \cprevext\pr{f},
  \end{align*}
  as required.

  To prove the second part of the statement, we assume that \(\cprev\) is an upper expectation.
  Recall from \lemmaref{lem:representation} that \(\cprev^*\pr{\prev}=0\) for all \(\prev\in\domprevs_{\cprev}\) and that \(\domprevs_{\cprev}\) is the set of dominated linear expectations (on~\(\gendomain\)).
  Hence, to see that \(\cprevext\) is positively homogeneous, it suffices to realise that for all \(\prev\in\domprevs_{\cprev}\) (i) \(\cprev^*\pr{\prev}=0\) due to \lemmaref{lem:representation}; and (ii) \(\hat\prev\) is homogeneous \citep[Chapter~8, Theorem~5~(i)]{1997Fristedt-Modern}.
  That \(\cprevext\) is subadditve follows from a similar argument as the one we used to prove that \(\cprevext\) is convex.
\end{proof}

\begin{proof}\textbf{of \corollaryref{cor:extension is unique}.}
  From Theorem~3.10 in~\cite{2018DenkKupperNendel}---or the functional version of Choquet's Capacitibility Theorem, see \citep[Proposition~2.1]{2019Bartl}---it follows that for all \(f\in\genbbmeas\pr{\gendomain}\cap\genbameas\pr{\gendomain}=\genmeas\pr{\gendomain}\cap\bfns\pr{\genset}\),
  \begin{equation}
  \label{eqn:proof of extension is unique:Choquet}
    \cprevext\pr{f}
    = \sup\st*{\lim_{n\to+\infty}\cprevext\pr{f_n}\colon \gendomain^{\nats}\ni\pr{f_n}_{n\in\nats}\searrow\leq f}.
  \end{equation}
  It remains for us to prove the equality in the statement for all \(f\in\genbbmeas\pr{\gendomain}\setminus\genbameas\pr{\gendomain}\), so let us fix any such~\(f\).
  Then \(\pr{f\wedge k}_{k\in\nats}\) is an increasing sequence in~\(\genbbmeas\pr{\gendomain}\cap\genbameas\pr{\gendomain}\) that converges pointwise to~\(f\), and therefore
  \begin{equation*}
    \cprevext\pr{f}
    = \lim_{k\to+\infty} \cprevext\pr{f\wedge k}
    = \sup\st[\big]{\cprevext\pr{f\wedge k}\colon k\in\nats}.
  \end{equation*}
  Because \(f\wedge k\in\genbbmeas\pr{\gendomain}\cap\genbameas\pr{\gendomain}\) for all \(k\in\nats\), it follows from this equality and Eqn.~\eqref{eqn:proof of extension is unique:Choquet} that
  \begin{align*}
    \cprevext\pr{f}
    &= \sup\st*{\lim_{n\to+\infty}\cprevext\pr{f_n}\colon k\in\nats, \gendomain^{\nats}\ni\pr{f_n}_{n\in\nats}\searrow\leq f\wedge k} \\
    &= \sup\st*{\lim_{n\to+\infty}\cprevext\pr{f_n}\colon \gendomain^{\nats}\ni\pr{f_n}_{n\in\nats}\searrow\leq f},
  \end{align*}
  as required.
\end{proof}

\begin{proof}\textbf{of \equationref{eqn:sigma cylevts vs sigma fdomain}.}
  Due to Lemma~8.1 (and Lemma~8.3) in \citep{2017Schilling-Measures}, \(\sigma\pr{\fdomain}\) is generated by the collection of level sets
  \begin{equation*}
    \mathcal{C}
    \coloneqq \st[\big]{\st{\pth\in\cadpths\colon f\pr{\pth}\geq\alpha}\colon f\in\fdomain, \alpha\in\reals}.
  \end{equation*}
  Hence, it follows from Eqn.~\eqref{eqn:cylevts from indicators} that every cylinder~\(\cyl\in\cylevts\) belongs to~\(\mathcal{C}\), and therefore also to~\(\sigma\pr{\fdomain}\).
  Consequently, \(\sigma\pr{\cylevts}\subseteq\sigma\pr{\fdomain}\).

  To prove that \(\sigma\pr{\fdomain}\subseteq\sigma\pr{\cylevts}\), it suffices to verify that any level set in~\(\mathcal{C}\) is a cylinder.
  To this end, we fix any \(f\in\fdomain\) and \(\alpha\in\reals\).
  By definition of~\(\fdomain\), there are some \(\ftset\in\ftsets\) and \(g\in\bfns\pr{\stsp^{\ftset}}\) such that \(f=g\circ\pi_{\ftset}\).
  Let \(A\coloneqq\st{x\in\stsp^{\ftset}\colon g\pr{x}\geq \alpha}\).
  Then clearly
  \begin{align*}
    \st{\pth\in\cadpths\colon f\pr{\pth}\geq\alpha}
    = \st{\pth\in\cadpths\colon \pi_{\ftset}\pr{\pth}\in A},
  \end{align*}
  so this level set is indeed a cylinder.
\end{proof}

\begin{proof}\textbf{of \lemmaref{lem:derived set function}.}
  That \(\altprob_{\prev}\) is finitely additive with \(\altprob_{\prev}\pr{\cadpths}=1\) follows immediately because \(\prev\) is a linear expectation.
  Hence, we focus on the second part of the statement.

  First, we assume that \(\prev\) is downward continuous.
  Then it follows immediately from the Daniell--Stone Theorem that \(\altprob_{\prev}=\prob_{\prev}\vert_{\cylevts}\), and therefore \(\altprob_{\prev}\) is countably additive.

  Second, we assume that \(\altprob_{\prev}\) is countably additive.
  Then it is well known, see for example Proposition~9 in~\citep[Chapter~7]{1997Fristedt-Modern} or Lemma~4.3 in~\citep[Chapter~II]{1994Rogers-Diffusions}, that for any decreasing~\(\pr{\cyl_n}_{n\in\nats}\in\cylevts^{\nats}\)---meaning that \(\cyl_n\supseteq \cyl_{n+1}\) for all \(n\in\nats\)---with \(\bigcap_{n\in\nats}\cyl_n=\emptyset\),
  \begin{equation}
  \label{eqn:proof of derived set function:to zero}
    \lim_{n\to+\infty} \altprob_{\prev}\pr{\cyl_n}
    = 0.
  \end{equation}
  To show that \(\prev\) is downward continuous, we fix any \(f\in\fdomain\) and any decreasing sequence~\(\pr{f_n}_{n\in\nats}\in\fdomain^{\nats}\) that converges pointwise to~\(f\).
  Then
  \begin{equation}
  \label{eqn:proof of derived set function:to reduced sequence}
    \prev\pr{f_n}-\prev\pr{f}
    = \prev\pr{f_n-f}
    \geq 0
    \quad\text{for all } n\in\nats.
  \end{equation}
  Obviously, \(\pr{f_n-f}_{n\in\nats}\) is a decreasing sequence in~\(\fdomain\) that converges pointwise to~\(0\).

  Fix any \(\epsilon\in\posreals\), and let \(\beta\coloneqq\norm{f_1-f}=\sup f_1-f\).
  Then for all \(n\in\nats\), we let \(\cyl_n \coloneqq \st{\pth\in\cadpths\colon f_n\pr{\pth}-f\pr{\pth}>\epsilon}\); it is a bit laborious to verify that \(\cyl_n\in\cylevts\), so we leave this as an exercise to the reader.
  This way, \(\pr{\cyl_n}_{n\in\nats}\) is a decreasing sequence in~\(\cylevts\) with \(\bigcap_{n\in\nats} \cyl_n=\emptyset\), and for all \(n\in\nats\), \(f_n-f\leq \epsilon+\beta \indica{\cyl_n}\) and therefore
  \begin{equation*}
    \prev\pr{f_n-f}
    \leq \epsilon + \prev\pr{\indica{\cyl_n}}
    = \epsilon + \altprob_{\prev}\pr{\cyl_n}.
  \end{equation*}
  It follows from this and Eqn.~\eqref{eqn:proof of derived set function:to zero} that
  \begin{equation*}
    \lim_{n\to+\infty} \prev\pr{f_n-f}
    \leq \lim_{n\to+\infty} \epsilon+\beta\altprob_{\prev}\pr{\cyl_n}
    = \epsilon.
  \end{equation*}
  Since this inequality holds for any strictly positive real number~\(\epsilon\), we infer from it and the one in Eqn.~\eqref{eqn:proof of derived set function:to reduced sequence} that
  \begin{equation*}
    \lim_{n\to+\infty} \prev\pr{f_n}
    = \prev\pr{f},
  \end{equation*}
  as required.
\end{proof}

\begin{proof}\textbf{of \theoremref{the:alt sufficient condition for downward continuity}.}
  To prove that \(\uprev\) is downward continuous, we recall from Proposition~\ref{prop:fidis to upper expectation} that \(\uprev\) is an upper expectation.
  By \lemmaref{lem:representation,lem:downward for convex}, it suffices to verify that every dominated linear expectation~\(\prev\) in
  \begin{equation*}
    \domprevs_{\uprev}
    \coloneqq\st{\prev\in\prevs_{\fdomain}\colon\pr{\forall f\in\fdomain}~\prev\pr{f}\leq\uprev\pr{f}}
  \end{equation*}
  is downward continuous.
  So fix any \(\prev\in\domprevs_{\uprev}\), and let
  \begin{equation*}
    \altprob_{\prev}
    \colon \cylevts\to\cci{0}{1}
    \colon \cyl\mapsto \prev\pr{\indica{\cyl}}.
  \end{equation*}
  We know from \lemmaref{lem:derived set function} that \(\altprob_{\prev}\) is finitely additive with \(\altprob_{\prev}\pr{\cadpths}=1\), and that \(\prev\) is downward continuous if and only if \(\altprob_{\prev}\) is countably additive.
  Hence, it suffices to show that \(\altprob_{\prev}\) is countably additive, and we will do so by checking that the conditions in \lemmaref{lem:prop24} are satisfied.

  First, fix any \(\ftset\in\ftsets\), and let
  \begin{equation*}
    \altprob_{\prev}^{\ftset}
    \colon \wp\pr{\stsp^{\ftset}}\to\cci{0}{1}\colon
    A\mapsto
    \altprob_{\prev}\pr[\big]{\pi_{\ftset}^{-1}\pr{A}}
    = \prev\pr[\big]{\indica{\pi_{\ftset}^{-1}\pr{A}}}.
  \end{equation*}
  Clearly, \(\altprob_{\prev}^{\ftset}\) is a non-negative set function with \(\altprob_{\prev}^{\ftset}\pr{\stsp^{\ftset}}=\altprob_{\prev}\pr{\cadpths}=1\) that is finitely additive.
  By a standard result in measure theory---see for example Proposition~9 in~\citep[Chapter~7]{1997Fristedt-Modern} or Lemma~4.3 in \citep[Chapter~II]{1994Rogers-Diffusions}---\(\altprob_{\prev}^{\ftset}\) is countably additive, and therefore a probability measure, if and only if for any decreasing sequence~\(\pr{A_k}_{k\in\nats}\) in \(\stsp^{\ftset}\) with \(\bigcap_{k\in\nats} A_k=\emptyset\), \(\lim_{k\to+\infty}\altprob_{\prev}^{\ftset}\pr{A_k}=0\).
  For any such sequence~\(\pr{A_k}_{k\in\nats}\), the corresponding sequence of indicators~\(\pr{\indica{\pi_{\ftset}^{-1}\pr{A_k}}}_{k\in\nats}\in\fdomain^{\nats}\) clearly decreases to~\(0\), and therefore
  \begin{multline*}
    0
    \leq \lim_{k\to+\infty} \altprob_{\prev}^{\ftset}\pr{A_k}
    \leq \lim_{k\to+\infty} \uprev\pr{\indica{\pi_{\ftset}^{-1}\pr{A_k}}} \\
    = \lim_{k\to+\infty} \uprev_{\ftset}\pr{\indica{A_k}}
    = 0,
  \end{multline*}
  where for the final equality we used that \(\uprev_{\ftset}\) is downward continuous and constant preserving.

  Next, fix some \(n\in\nats\) and \(t\in\cci{0}{n}\).
  Then for all \(s\in\tset\setminus\st{t}\),
  \begin{equation*}
    \altprob_{\prev}^{\st{t,s}}\pr{D_{\st{t,s}}^{\neq}}
    \leq \uprev_{\st{s,t}}\pr{d_{\st{t,s}}^{\neq}}.
  \end{equation*}
  Hence,
  \begin{align*}
    \limsup_{s\to t} \frac{\altprob_{\prev}^{\st{t,s}}\pr{D_{\st{t,s}}^{\neq}}}{\abs{s-t}}
    \leq \limsup_{s\to t} \frac{\uprev_{\st{t,s}}\pr{d_{\st{t,s}}^{\neq}}}{\abs{s-t}}
    \leq \lambda_n,
  \end{align*}
  as required.
\end{proof}

\begin{proof}\textbf{of Proposition~\ref{prop:poisson group}.}
  We have already established that \(\pr{\poissgr_t}_{t\in\tset}\) is a semigroup of upper transition operators, so it remains for us to verify (i) that \(\poissgr_t\) is downward continuous for all \(t\in\posreals\), and (ii) that \(\pr{\poissgr_t}_{t\in\tset}\) has uniformly bounded rate.

  To verify that \(\poissgr_t\) is downward continuous for all \(t\in\posreals\), we fix some \(t\in\posreals\) and \(z\in\nnegints\), and consider any \(\bfnsstsp^{\nats}\ni\pr{f_n}_{n\in\nats}\searrow f\in\bfnsstsp\).
  On the one hand, since \(\poissgr_t\) is isotone, \(\pr{\br{\poissgr_t f_n}\pr{z}}_{n\in\nats}\) decreases, with \(\lim_{n\to+\infty} \br{\poissgr_t f_n}\pr{z}\geq\br{\poissgr_t f}\pr{z}\).
  On the other hand, for all \(n\in\nats\), it follows from the subadditivity of~\(\poissgr_t\) that
  \begin{equation*}
    \br{\poissgr_t f_n}\pr{z}
    \leq \br{\poissgr_t \pr{f_n-f}}\pr{z}+\br{\poissgr_t f}\pr{z}.
  \end{equation*}
  Hence, it suffices for us to show that
  \begin{equation}
  \label{eqn:proof of lemma:limit}
    \lim_{n\to+\infty} \br{\poissgr_t \pr{f_n-f}}\pr{z}
    \leq 0.
  \end{equation}
  For all \(n\in\nats\), let
  \begin{equation*}
    \tilde{f}_n
    \colon \nnegints\to\reals
    \colon x\mapsto \max\st[\big]{f_n\pr{y}-f\pr{y}\colon y\in\nnegints, y\leq x}.
  \end{equation*}
  It is easy to verify that for all \(n\in\nats\), \(\tilde{f}_n\) is a bounded function that dominates~\(f_n-f\), so it  follows from the isotonicity of~\(\poissgr_t\) that
  \begin{equation*}
    \br{\poissgr_t \pr{f_n-f}}\pr{z}
    \leq \br{\poissgr_t \tilde{f}_n}\pr{z}.
  \end{equation*}
  Moreover, since \(\tilde{f}_n\) is increasing (in the sense that \(\tilde{f}_n\pr{z}\leq\tilde{f}_n\pr{y}\) whenever \(z\leq y\)), it follows from Theorem~15, Proposition~16 and Eqn.~(18) in \citep{2019ErreygersDeBock} that
  \begin{equation*}
    \br{\poissgr_t \pr{f_n-f}}\pr{z}
    \leq \sum_{y=z}^{+\infty} \tilde{f}_n\pr{y}\psi_{\ulambda t}\pr{\st{y-z}}
    = \int \tilde{f}_n\pr{z+\noarg}\mathrm{d}\psi_{\ulambda t},
  \end{equation*}
  where \(\psi_{\ulambda t}\colon\wp\pr{\nnegints}\to\cci{0}{1}\) is the probability measure corresponding to the Poisson distribution with parameter~\(\ulambda t\).
  Finally, it is easy to verify that \(\pr{\tilde{f}_n}_{n\in\nats}\) is monotone and decreases pointwise to~\(0\), so a straightforward application of the Monotone Convergence Theorem yields
  \begin{equation*}
    \lim_{n\to+\infty} \int \tilde{f}_n\pr{z+\noarg}\mathrm{d}\psi_{\ulambda t}
    = 0.
  \end{equation*}
  Eqn.~\eqref{eqn:proof of lemma:limit} follows from this equality and the previous inequality, and this finalises our proof for the downward continuity.

  Finally, we verify that the sublinear Markov semigroup~\(\pr{\poissgr_t}_{t\in\tset}\) has uniformly bounded rate---so satisfies Eqn.~\eqref{eqn:uniformly bounded rate}.
  First, note that due to constant additivity,
  \begin{multline*}
    \limsup_{t\searrow0}\frac1{t}\sup\st*{\br{\poissgr_t \pr{1-\indica{x}}}\pr{x}\colon x\in\stsp} \\
    = \limsup_{t\searrow0}\sup\st*{\frac{\br{\poissgr_t \pr{-\indica{x}}}\pr{x}-\pr{-\indica{x}\pr{x}}}{t}\colon x\in\stsp}.
  \end{multline*}
  It follows from this, the definition of the norms~\(\norm{\noarg}\) and \(\zopnorm{\noarg}\) and Eqn.~\eqref{eqn:poissgr:poissgen is derivative} that
  \begin{align*}
    \MoveEqLeft\limsup_{t\searrow0}\frac1{t}\sup\st*{\br{\poissgr_t \pr{1-\indica{x}}}\pr{x}\colon x\in\stsp} \\
    &\leq\limsup_{t\searrow0} \st*{\norm*{\frac{\poissgr\pr{-\indica{x}}-\eye\pr{-\indica{x}}}{t}}\colon x\in\stsp} \\
    &\leq \lim_{t\searrow0}\zopnorm*{\frac{\poissgr_t-\eye}{t}} = \zopnorm{\poissgen}
    <+\infty,
  \end{align*}
  where the strict inequality holds because \(\poissgen\) is a bounded operator.
\end{proof}
\fi

\end{document}